\documentclass[12pt]{amsart}
\usepackage{amssymb,times}

\makeatletter
\def\@strippedMR{}
\def\@scanforMR#1#2#3\endscan{
  \ifx#1M\ifx#2R\def\@strippedMR{#3}
  \else\def\@strippedMR{#1#2#3}
  \fi\fi}
\renewcommand\MR[1]{\relax\ifhmode\unskip\spacefactor3000 \space\fi
  \@scanforMR#1\endscan
  MR\MRhref{\@strippedMR}{\@strippedMR}}
\makeatother

\addtolength{\textwidth}{+4cm}
\addtolength{\textheight}{+2cm}
\hoffset-2cm
\voffset-1cm
\hfuzz2pt
\vfuzz2pt

\parindent=0in
\parskip=\medskipamount

\newtheorem*{Thm*}{Theorem}
\newtheorem{Thm}{Theorem}
\newtheorem{Cor}[Thm]{Corollary}
\newtheorem{Prop}[Thm]{Proposition}
\newtheorem{Lemma}[Thm]{Lemma}

\theoremstyle{definition}
\newtheorem{Defn}{Definition}
\newtheorem{Notation}[Defn]{Notation}

\newtheorem{Remark}{Remark}

\newcommand{\mf}[1]{\mathbb{#1}}
\newcommand{\mc}[1]{\mathcal{#1}}

\DeclareMathOperator{\NC}{\mathrm{NC}}
\DeclareMathOperator{\Part}{\mathcal{P}}

\DeclareMathOperator{\Inner}{\mathrm{Inn}}
\DeclareMathOperator{\Outer}{\mathrm{Out}}
\DeclareMathOperator{\Sing}{\mathit{Sing}}

\newcommand{\norm}[1]{\left\Vert#1\right\Vert}
\newcommand{\abs}[1]{\left\vert#1\right\vert}
\newcommand{\chf}[1]{\mathbf{1}_{#1}}
\newcommand{\set}[1]{\left\{#1\right\}}
\newcommand{\ip}[2]{\left \langle #1, #2 \right \rangle}

\renewcommand{\phi}{\varphi}

\allowdisplaybreaks[1]

\title{Two-state free Brownian motions}
\author{Michael Anshelevich}
\thanks{This work was supported in part by NSF grant DMS-0900935}
\address{Department of Mathematics, Texas A\&M University, College Station, TX 77843-3368}
\email{manshel@math.tamu.edu}
\subjclass[2010]{Primary 46L53; Secondary 46L54, 60G51, 60H05}
\date{\today}

\begin{document}

\begin{abstract}
In a two-state free probability space $(\mc{A}, \phi, \psi)$, we define an \emph{algebraic two-state free Brownian motion} to be a process with two-state freely independent increments whose two-state free cumulant generating function $R^{\phi, \psi}(z)$ is quadratic. Note that a priori, the distribution of the process with respect to the second state $\psi$ is arbitrary. We show, however, that if $\mc{A}$ is a von Neumann algebra, the states $\phi, \psi$ are normal, and $\phi$ is faithful, then there is only a one-parameter family of such processes. Moreover, with the exception of the actual free Brownian motion (corresponding to $\phi = \psi$), these processes only exist for finite time.
\end{abstract}


\maketitle

\section{Introduction}

The study of free probability was initiated by Voiculescu in the early 1980s \cite{Voi85}. While free probability has crucial applications to the study of operator algebras and random matrices, it has also developed into a deep and sophisticated theory in its own right. As one illustration, consider the free Central Limit Theorem. Its formulation is the same as for the usual CLT, with two changes. First, the objects involved are noncommutative random variables, that is, elements of a noncommutative $\ast$-algebra (or $C^\ast$-algebra, or von Neumann algebra $\mc{A}$, or the algebra of operators affiliated to it), with a state $\phi$ which replaces the expectation functional. Second, independence is replaced by Voiculescu's free independence, which is more appropriate for non-commuting objects. The algebraic version of the theorem was proved in \cite{Voi85}, followed by the full analytic version for identically distributed triangular arrays in \cite{Pata-CLT} and general triangular arrays in \cite{Chistyakov-Gotze-Limit-I}. Note that in the analytic theorems, the hypothesis on the distributions are identical to those in the usual CLT. On the other hand, in \cite{BerVoiSuperconvergence} and \cite{Wang-Local-CLT} the authors showed that the mode of convergence in the free CLT is actually much stronger than the classical convergence in distribution. In all these results, the limiting distribution is the semicircle law. It is characterized by having zero free cumulants of order greater than $2$ (of course, in most of these results, no a priori assumption on the existence of free cumulants is made).

An important point about free probability is that, as mentioned above, there are different settings in which the theory can be studied. Consider the notion of (reduced) free product, related to the discussion above by the property that different components in a free product are freely independent. One can take a reduced free product of $\ast$-algebras with states, or $C^\ast$-algebras with representations, or of Hilbert spaces, or of von Neumann algebras with states, and all of these constructions are consistent. One can frequently extend purely algebraic results to the more analytic context of normed algebras (although, as illustrated in \cite{BV93}, such extensions are often non-trivial).

This paper is about a related theory where this is no longer the case. In \cite{Boz-Spe-Psi-independent,BLS96}, Bo{\.z}ejko, Leinert, and Speicher constructed what they called a conditionally free probability theory, which we will refer to as two-state free probability theory. The setting is now a $\ast$-algebra (von Neumann algebra, etc.) $\mc{A}$ with \emph{two} states, say $\phi$ and $\psi$. Initially the authors had a single example of such a structure, but the theory has since been quite successful, at least in two settings. For results concerning single distributions, for example the study of limit theorems, see \cite{Krystek-Conditional,Belinschi-C-free-unbounded,Boz-Bryc-Two-states,Wang-Additive-c-free}; on the other hand, for results in the purely algebraic setting, see \cite{Popa-c-free-amalgamation} and \cite{AnsAppell3}. However, very little work on this theory has been done in the analytic setting; in fact, we are only aware of one article \cite{Ricard-t-Gaussian}. In this paper we show that this is not a coincidence, by the following example. We define what is natural to call (algebraic) two-state free Brownian motions. This is a very large class of processes, since the ``Brownian motion'' property only determines the relative position of the expectations $\phi$ and $\psi$, but the choice of $\psi$ is arbitrary, at least in the algebraic setting. We then show that if $\mc{A}$ is a von Neumann algebra, and the expectation $\phi$ is faithful, then out of this infinite-dimensional family only a one-parameter family of processes can actually be realized. Moreover, with the exception of the actual free Brownian motion (corresponding to the case $\phi = \psi)$, these processes only exist on a finite time interval.

The paper is organized as follows. After the introduction and a background section, in Section~\ref{Section:Uniqueness} we define the two-state free Brownian motions, and show that if $\phi$ is faithful, only a one-parameter family of these processes may exist. The method of proof involves stochastic integration. In Section~\ref{Section:Existence} we show that this one-parameter family actually does exist, by using a Fock space construction. We show that these processes are not Markov, even though they have classical versions, the time-reversed free Poisson processes of \cite{Bryc-Wesolowski-Bi-Poisson}. We also compute the generators of these processes. Finally, Section~\ref{Section:Cstar} contains some comments on the case when $\mc{A}$ is a $C^\ast$- rather than a von Neumann algebra. In particular, in this section we give another characterization of the one-parameter family mentioned above: in a large class, these are the only processes whose higher variation processes converge to the appropriate limits in $L^\infty(\phi)$ rather than just in $L^2(\phi)$.

\textbf{Acknowledgements.} I am grateful to Ken Dykema and Laura Matusevich for important comments. Thanks also to W{\l}odek Bryc for giving me an early version of the article \cite{Bryc-Markov-Meixner}, which led (in a very indirect way) to the present work.

\section{Preliminaries}

\subsection{Partitions.}

$\Part(n)$ will denote the lattice of all partitions of a set of $n$ elements (into nonempty, pairwise disjoint subsets called blocks). The number of blocks of $\pi$ is denoted $\abs{\pi}$. The partitions are ordered by reverse refinement, so that $\hat{0} = \set{(1), (2), \ldots, (n)}$ is the smallest and $\hat{1} = \set{(1, 2, \ldots, n)}$ is the largest partition. In the lattice, $\sigma \vee \pi$ is the smallest partition which is larger than both $\sigma$ and $\pi$, and $\sigma \wedge \pi$ is the largest partition which is smaller than both $\sigma$ and $\pi$.

$\NC(n)$ is the sub-lattice of non-crossing partitions, which have the property that whenever $x_1 < y_1 < x_2 < y_2$ with $x_1, x_2 \in U$ and $y_1 , y_2 \in V$, where $U, V$ are blocks of the partition $\pi$, then $U = V$. In a non-crossing partition $\pi$, a block $V$ is inner if for some $y_1, y_2 \not \in V$ and all $x \in V$, $y_1 < x < y_2$, otherwise it is called outer. Denote by $\Inner(\pi)$ all the inner blocks of the non-crossing partition $\pi$, and by $\Outer(\pi)$ the outer blocks. Also, denote by $\NC_{1,2}(n)$ all the non-crossing partitions into pairs and singletons, in other words partitions with all $\abs{V} \leq 2$. $\Sing(\pi)$ are all the singleton blocks of a partition.

If $f$ is a function of $k < n$ arguments and $V \subset \set {1, \ldots, n}$, $V = \set{i(1) < i(2) < \ldots < i(k)}$, then we denote
\[
f(x_1, x_2, \ldots, x_n : V) = f(x_{i(1)}, x_{i(2)}, \ldots, x_{i(k)}).
\]

\subsection{Jacobi parameters.}

If $\nu$ is a probability measure on $\mf{R}$ all of whose moments are finite, it has associated to it two sequences of Jacobi parameters,
\begin{equation*}
J(\nu) =
\begin{pmatrix}
\beta_0, & \beta_1, & \beta_2, & \beta_3, & \ldots \\
\gamma_1, & \gamma_2, & \gamma_3, & \gamma_4, & \ldots
\end{pmatrix}.
\end{equation*}
There are numerous ways of defining these parameters, using orthogonal polynomials, tridiagonal matrices, or Viennot-Flajolet theory. For our purposes, the most convenient definition is the following. The Cauchy transform of $\nu$ can be expanded into a formal power series
\[
G_\nu(z) = \int_{\mf{R}} \frac{1}{z - x} \,d\nu(x) = \sum_{n=0}^\infty m_n(\nu) \frac{1}{z^{n+1}},
\]
where $m_n(\nu) = \int_{\mf{R}} x^n \,d\nu(x)$ is the $n$-th moment of $\nu$. Then we also have a continued fraction expansion
\[
G_\nu(z) =
\cfrac{1}{z - \beta_0 -
\cfrac{\gamma_1}{z - \beta_1 -
\cfrac{\gamma_2}{z - \beta_2 -
\cfrac{\gamma_3}{z - \beta_3 -
\cfrac{\gamma_4}{z - \ldots}}}}}.
\]
If some $\gamma = 0$, the continued fraction terminates, in which case the subsequent $\beta$ and $\gamma$ coefficients can be defined arbitrarily. See \cite{Chihara-book} for more details.

The monic orthogonal polynomials $\set{P_n}$ for $\nu$ satisfy a recursion relation
\[
x P_n(x) = P_{n+1} + \beta_n P_n(x) + \gamma_n P_{n-1}(x),
\]
with $P_{-1} (x) = 0$.

Finally, we define the map $\Phi_t$ (Jacobi shift, an inverse of coefficient stripping) on probability measures with finite moments by
\begin{equation*}
J(\Phi_t[\nu]) =
\begin{pmatrix}
0, & \beta_0, & \beta_1, & \beta_2, & \beta_3, & \ldots \\
t, & \gamma_1, & \gamma_2, & \gamma_3, & \gamma_4, & \ldots
\end{pmatrix}.
\end{equation*}
for $\nu$ as above. Equivalently,
\[
G_{\Phi_t[\nu]}(z) = \frac{1}{z - t G_\nu(z)},
\]
and this last definition makes sense for general probability measures, see Remark~4.3 of \cite{Belinschi-Nica-B_t}.

\subsection{Two-state free probability theory.}

In this section, $(\mc{A}, \phi, \psi)$ is an algebraic two-state non-commutative probability space, that is, $\mc{A}$ is a star-algebra and $\phi, \psi$ are positive unital functionals on it. $\mc{A}^{sa}$ will denote the self-adjoint part of $\mc{A}$.

For $X_1, X_2, \ldots, X_n \in \mc{A}$, define the free cumulant functionals $R^\psi(X_1, X_2, \ldots, X_n)$ via
\[
\psi \left[ X_1 X_2 \ldots X_n \right] = \sum_{\pi \in \NC(n)} \prod_{V \in \pi} R^\psi(X_1, X_2, \ldots, X_n : V)
\]
and the two-state free cumulant functionals $R^{\phi, \psi}(X_1, X_2, \ldots, X_n)$ via
\[
\phi \left[ X_1 X_2 \ldots X_n \right] = \sum_{\pi \in \NC(n)} \prod_{V \in \Outer(\pi)} R^{\phi, \psi}(X_1, X_2, \ldots, X_n : V) \prod_{V \in \Inner(\pi)} R^\psi(X_1, X_2, \ldots, X_n : V).
\]
Then both $R^\psi$ and $R^{\phi, \psi}$ are multilinear functionals.

Denote $R^{\phi, \psi}_n(X) = R^{\phi, \psi}(X, X, \ldots, X)$, where $X$ is repeated $n$ times.

If $X$ has distribution $\mu$ with respect to $\phi$ and $\nu$ with respect to $\psi$, its two-state free cumulant generating function is
\[
R^{\mu, \nu}(z) = R^{\phi, \psi}_X(z) = \sum_{n=1}^\infty R^{\phi, \psi}_n(X) z^n.
\]
The definition of the free cumulant generating function $R^\nu(z) = R^\psi_X(z)$ is similar.

\begin{Defn}
Let $\mc{A}_1, \mc{A}_2, \ldots, \mc{A}_k \subset \mc{A}$ be a family of subalgebras.
\begin{enumerate}
\item
This family is $\psi$-freely independent if for any $a_1, a_2, \ldots, a_n \in \mc{A}$,
\[
a_j \in \mc{A}_{i(j)}, \quad i(j) \neq i(j+1)
\]
with all $\psi[a_j] = 0$, we have
\[
\psi[a_1 a_2 \ldots a_n] = 0.
\]
\item
This family is two-state freely independent if it is $\psi$-freely independent and, under the same assumptions on $a_1, \ldots, a_n$, also
\[
\phi[a_1 a_2 \ldots a_n] = \phi[a_1] \phi[a_2] \ldots \phi[a_n].
\]
\end{enumerate}
\end{Defn}

\begin{Thm*}[Theorem 3.1 of \cite{BLS96}]
Let $\mc{A}_1, \mc{A}_2, \ldots, \mc{A}_k \subset \mc{A}$ be a family of subalgebras.
\begin{enumerate}
\item
This family is $\psi$-freely independent if and only if for any
\[
a_1, a_2, \ldots, a_n \in \cup_{i=1}^k \mc{A}_i,
\]
we have
\[
R^\psi(a_1, a_2, \ldots, a_n) = 0
\]
unless all $a_1, a_2, \ldots, a_n \in \mc{A}_j$ for the same $j$.
\item
This family is two-state freely independent if and only if it is $\psi$-freely independent and also
\[
R^{\phi, \psi}(a_1, a_2, \ldots, a_n) = 0
\]
unless all $a_1, a_2, \ldots, a_n \in \mc{A}_j$ for the same $j$.
\end{enumerate}
\end{Thm*}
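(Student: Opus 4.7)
Both parts are proved by induction on $n$ via the respective moment-cumulant formulas. The key combinatorial lemma, which I would state and prove first, is: if $i(1), \ldots, i(n)$ satisfies $i(j) \neq i(j+1)$ and is not constant, and $\pi \in \NC(n)$ has every block contained within a single subalgebra, then either $\pi = \hat{0}$ or $\pi$ contains an inner singleton block. The proof takes any block $W$ of $\pi$ of size $\geq 2$; the hypothesis forces its positions to be non-consecutive, opening a gap, and the induced non-crossing sub-partition of that gap has an interval block, which must be a singleton (by alternating) nested inside $W$, hence inner.

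For part (a), direction $(\Leftarrow)$ follows by expanding $\psi[a_1 \cdots a_n]$ for $\psi$-centered $a_j \in \mc{A}_{i(j)}$ with $i(j) \neq i(j+1)$: every partition contributes zero, since either a block is mixed (cumulant vanishes by hypothesis, including $\pi = \hat{1}$) or all blocks lie in one subalgebra, in which case the lemma (or the $\pi = \hat{0}$ case) supplies a singleton contributing $\psi[a_j] = 0$. For $(\Rightarrow)$, I would induct on $n$: multilinearity together with $R^\psi(\mathbf{1}, \cdot, \ldots, \cdot) = 0$ for $n \geq 2$ lets me center, and a product-of-cumulants identity reduces to the alternating case. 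Then $\psi$-freeness gives $\psi[a_1 \cdots a_n] = 0$; in the expansion, all terms except $\pi = \hat{1}$ vanish by the same analysis using the inductive hypothesis, yielding $R^\psi(a_1, \ldots, a_n) = 0$.

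For part (b) the argument is parallel, using the two-state moment-cumulant formula. For $\psi$-centered alternating inputs, only $\pi = \hat{0}$ and $\pi = \hat{1}$ can contribute: all singletons of $\hat{0}$ are outer (none is nested inside a larger block), contributing $\prod_j R^{\phi, \psi}_1(a_j) = \prod_j \phi[a_j]$, while $\hat{1}$ contributes $R^{\phi, \psi}(a_1, \ldots, a_n)$. Thus
\[
\phi[a_1 \cdots a_n] = \prod_{j=1}^{n} \phi[a_j] + R^{\phi, \psi}(a_1, \ldots, a_n),
\]
and both directions of the equivalence fall out by matching this against the two-state freeness identity $\phi[a_1 \cdots a_n] = \prod_j \phi[a_j]$ (with $\psi$-freeness, equivalently the vanishing of mixed $R^\psi$-cumulants, supplied by part~(a)).

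The main technical obstacle is not the combinatorics but the reduction to alternating inputs in the forward directions, which requires a product formula expressing cumulants of the form $R^\psi(a_1 b_1, a_2, \ldots, a_n)$ as a sum of joint cumulants in the individual factors indexed by certain non-crossing partitions. Once that formula is in hand, together with the interval block lemma, both parts of the theorem follow uniformly from moment-cumulant inversion.
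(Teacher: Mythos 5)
The paper does not prove this statement: it is quoted as Theorem~3.1 of \cite{BLS96} and used as a black box, so there is no in-paper argument to compare yours against. Your plan is the standard moment--cumulant induction, and your interval-block lemma is correct and is indeed the combinatorial heart of the matter: for a non-constant colouring with $i(j)\neq i(j+1)$, any $\pi\in\NC(n)$ all of whose blocks are homogeneous is either $\hat{0}$ or has an inner singleton, exactly by the gap-plus-interval-block argument you give.

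Two points need more than the wave you give them. First, the identity
\[
\phi[a_1\cdots a_n]=\prod_{j=1}^n \phi[a_j]+R^{\phi,\psi}(a_1,\ldots,a_n)
\]
is not unconditional for $\psi$-centered alternating tuples: for the pattern $1,2,1,2$ the partition $\set{(1,2),(3,4)}$ has two mixed \emph{outer} pair blocks and no inner block, so its contribution $R^{\phi,\psi}(a_1,a_2)\,R^{\phi,\psi}(a_3,a_4)$ is killed neither by centering nor by part~(a); it vanishes only by the inductive hypothesis (forward direction) or by the assumed vanishing of mixed two-state cumulants (backward direction). State the identity inside the induction, not as a free-standing fact. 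Second, the reduction to alternating tuples in the forward directions rests on a products-as-arguments formula for cumulants. For $R^\psi$ this is the standard Krawczyk--Speicher formula and can be cited; for $R^{\phi,\psi}$ the analogous formula is genuinely less standard, and since it carries the whole weight of the forward direction of (b) for non-alternating tuples, you would need to prove it --- or circumvent it, e.g., by noting that once part~(a) and $\psi$-freeness are in hand, two-state freeness determines all joint $\phi$- and $\psi$-moments from the restrictions, so it suffices to exhibit one model (the conditionally free product construction) in which the mixed $R^{\phi,\psi}$ cumulants vanish by construction. With those two items supplied, the argument goes through.
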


\section{Almost uniqueness of the two-state free Brownian motion}
\label{Section:Uniqueness}

\subsection{Algebraic framework}

\begin{Defn}
A family $\set{X(t), 0 \leq t \leq T} \subset \mc{A}^{sa}$ is a \emph{process with two-state freely independent increments} if the increments
\[
X([s,t)) = X(t) - X(s)
\]
of this process corresponding to disjoint intervals are two-state freely independent. For convenience, we will also assume that $X(0) = 0$, $\phi[X(t)] = 0$ for all $t$, and the distributions of the increments of the process with respect to both $\phi$ and $\psi$ are stationary. These assumptions can be dropped.

From now on, whenever we are considering such a process, we will assume that $\set{X(t): 0 \leq t \leq T}$ generate $\mc{A}$.
\end{Defn}

\begin{Defn}
$X$ has a \emph{two-state normal distribution} if $R^{\phi, \psi}_n(X) = 0$ for $n > 2$.
\end{Defn}

\begin{Remark}
The justification for this definition is that such random variables appear in the two-state free central limit theorem, see Theorem 4.3 in \cite{BLS96} and Lemma~7 and Remark~3 in \cite{AnsAppell3}. Theorem~\ref{Thm:Expectations} show that alternatively, we could define a two-state free Brownian motion below by requiring it to have zero higher variations, computed as limits in $L^2(\phi)$.
\end{Remark}

\begin{Defn}
A family $\set{X(t), 0 \leq t \leq T}$ is an \emph{algebraic two-state free Brownian motion} if it is a process with two-state freely independent increments which have two-state normal distributions, $\phi$-mean zero, and $\phi$-variance
\[
\phi[(X(t) - X(s))^2] = (t-s).
\]
\end{Defn}

\subsection{Analytic framework}

Throughout the rest of the paper, $(\mc{A}, \phi, \psi)$ will be a $W^\ast$-non\-com\-mu\-ta\-tive probability space, that is, $\mc{A}$ a von Neumann algebra, $\phi$ a faithful normal state on $\mc{A}$, and $\psi$ a normal (typically not faithful) state on $\mc{A}$.

We will call algebraic two-state free Brownian motions that exist in this setting simply two-state free Brownian motions.

We say that $A_n \rightarrow A$ in $L^2(\phi)$ if $\phi [\abs{A_n - A}^2] \rightarrow 0$.

\begin{Notation}
In a number of proofs, we will fix a time $T > 0$. In that case, we denote $X = X(T)$ and
\[
X_{i, N} = X \left(\frac{i}{N} T \right) - X \left(\frac{i-1}{N} T \right).
\]
Note that the two-state free independence of increments implies
\[
R^{\psi}_k (X_{i, N}) = \frac{1}{N} R^{\psi}_k(X), \qquad R^{\phi, \psi}_k(X_{i, N}) = \frac{1}{N} R^{\phi, \psi}_k(X).
\]
\end{Notation}

\begin{Thm}
\label{Thm:Expectations}
For a process with two-state freely independent increments,
\[
\lim_{N \rightarrow \infty} \phi \left[ \left(\sum_{i=1}^N X_{i,N}^k \right)^2 \right] = R^{\phi, \psi}_{k}(X)^2 + R^{\phi, \psi}_{2k}(X)
\]
and
\[
\lim_{N \rightarrow \infty} \phi \left[ \left(\sum_{i=1}^N X_{i,N}^2 - R^{\phi, \psi}_2(X) \right)^2 \right] = R^{\phi, \psi}_4(X).
\]
In particular, for a two-state free Brownian motion, in $L^2(\phi)$
\[
\lim_{N \rightarrow \infty} \sum_{i=1}^N X_{i,N}^2 = R^{\phi, \psi}_2(X)
\]
and for $k > 2$,
\[
\lim_{N \rightarrow \infty} \sum_{i=1}^N X_{i,N}^k = 0.
\]
We record these results symbolically as
\[
\int_0^T (dX(t))^2 = T
\]
and $\int_0^T (dX(t))^k = 0$ for $k > 2$.
\end{Thm}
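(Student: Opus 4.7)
The proof I would give is a direct moment calculation via the two-state moment-cumulant formula. Start by expanding
\[
\phi\!\left[\left(\sum_{i=1}^N X_{i,N}^k\right)^{\!2}\right] = \sum_{i=1}^N \phi[X_{i,N}^{2k}] + \sum_{i \neq j} \phi[X_{i,N}^k \, X_{j,N}^k].
\]
For each $i \neq j$, apply the moment-cumulant formula to the string of $2k$ variables; by the characterization of two-state freeness cited above (both $R^\psi$ and $R^{\phi,\psi}$ vanish on mixed arguments), only non-crossing partitions $\pi$ of $\set{1, \ldots, 2k}$ whose blocks are monochromatic (each block sitting entirely in $\set{1, \ldots, k}$ or entirely in $\set{k+1, \ldots, 2k}$) contribute. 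Such a $\pi$ splits as a disjoint union $\pi_1 \sqcup \pi_2$ of non-crossing partitions on the two intervals, and because the intervals are adjacent, outer/inner status of each block is determined inside its own half. Thus the sum factors, giving
\[
\phi[X_{i,N}^k \, X_{j,N}^k] = \phi[X_{i,N}^k]\,\phi[X_{j,N}^k].
\]

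Next, expand $\phi[X_{1,N}^m]$ using the two-state moment-cumulant formula. Every block of size $\abs{V}$ contributes a factor $R^{\phi,\psi}_{\abs{V}}(X_{1,N}) = \frac{1}{N} R^{\phi,\psi}_{\abs{V}}(X)$ or $R^\psi_{\abs{V}}(X_{1,N}) = \frac{1}{N} R^\psi_{\abs{V}}(X)$, so a partition with $b$ blocks contributes $O(N^{-b})$. Since $R^{\phi,\psi}_1(X) = \phi[X] = 0$, the leading contribution comes from the single-block partition $\hat 1$, and
\[
\phi[X_{1,N}^m] = \frac{1}{N} R^{\phi,\psi}_m(X) + O(N^{-2}).
\]
Therefore $\sum_i \phi[X_{i,N}^{2k}] = N \phi[X_{1,N}^{2k}] \to R^{\phi,\psi}_{2k}(X)$ and $\sum_{i \neq j} \phi[X_{i,N}^k] \phi[X_{j,N}^k] = N(N-1) \phi[X_{1,N}^k]^2 \to R^{\phi,\psi}_k(X)^2$, proving the first identity.

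For the second identity, observe that $\phi[X_{1,N}^2] = \frac{1}{N} R^{\phi,\psi}_2(X)$ \emph{exactly} (the only contributing partition of $\set{1,2}$ is $\hat 1$, as $R^{\phi,\psi}_1 = 0$), so $\phi\bigl[\sum_i X_{i,N}^2\bigr] = R^{\phi,\psi}_2(X)$ exactly. Then
\[
\phi\!\left[\left(\sum_i X_{i,N}^2 - R^{\phi,\psi}_2(X)\right)^{\!2}\right] = \phi\!\left[\left(\sum_i X_{i,N}^2\right)^{\!2}\right] - R^{\phi,\psi}_2(X)^2 \to R^{\phi,\psi}_4(X)
\]
by the first identity at $k=2$. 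Finally, the $L^2(\phi)$ claims for a two-state free Brownian motion are immediate: since $R^{\phi,\psi}_n(X) = 0$ for $n > 2$, the second identity shows $\sum X_{i,N}^2 \to R^{\phi,\psi}_2(X) = T$, while for $k > 2$ both $R^{\phi,\psi}_k(X)$ and $R^{\phi,\psi}_{2k}(X)$ vanish, so $\phi[(\sum X_{i,N}^k)^2] \to 0$.

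The only genuinely non-routine step is the factorization $\phi[X_{i,N}^k X_{j,N}^k] = \phi[X_{i,N}^k]\phi[X_{j,N}^k]$ for $i \neq j$, whose subtle point is that the outer/inner designation of a block in the concatenated partition agrees with its designation inside its half. Once that is cleanly established, everything else is bookkeeping with powers of $1/N$.
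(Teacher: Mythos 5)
Your proof is correct and takes essentially the same route as the paper's: split the square into diagonal and off-diagonal terms, reduce the off-diagonal ones to $N(N-1)\phi[X_{1,N}^k]^2$, and track powers of $1/N$ through the two-state moment-cumulant expansion using $R^{\phi,\psi}_1(X)=0$. The only difference is that you explicitly justify the factorization $\phi[X_{i,N}^k X_{j,N}^k]=\phi[X_{i,N}^k]\,\phi[X_{j,N}^k]$ (monochromatic blocks, inner/outer status preserved under concatenation), a step the paper uses without comment.
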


\begin{proof}
\[
\begin{split}
\phi \left[ \left(\sum_{i=1}^N X_{i,N}^k \right)^2 \right]
&= \sum_{i \neq j}^N \phi \left[ X_{i,N}^k X_{j,N}^k \right]+ \sum_{i=1}^N \phi \left[ X_{i,N}^{2k} \right] \\
& = N (N-1) \phi[X_{1,N}^k]^2 + N \phi[X_{1,N}^{2k}] \\
\end{split}
\]
Now using free cumulant expansions, this expression equals
\[
\begin{split}
&\quad = N (N-1) \left( \frac{1}{N} R^{\phi, \psi}_{k}(X) + \sum_{\substack{\pi \in \NC(k) \\ \pi \neq \hat{1}}} \frac{1}{N^{\abs{\pi}}} \prod_{V \in \Outer(\pi)} R^{\phi, \psi}_{\abs{V}}(X) \prod_{V \in \Inner(\pi)} R^\psi_{\abs{V}}(X) \right)^2 \\
&\qquad\qquad + N \left( \frac{1}{N} R^{\phi, \psi}_{2k}(X) + \sum_{\substack{\pi \in \NC(2k) \\ \pi \neq \hat{1}}} \frac{1}{N^{\abs{\pi}}} \prod_{V \in \Outer(\pi)} R^{\phi, \psi}_{\abs{V}}(X) \prod_{V \in \Inner(\pi)} R^\psi_{\abs{V}}(X) \right) \\
&\quad \stackrel{N \rightarrow \infty}{\longrightarrow} R^{\phi, \psi}_{k}(X)^2 + R^{\phi, \psi}_{2k}(X).
\end{split}
\]
Also,
\[
\begin{split}
& \phi \left[ \left(\sum_{i=1}^N X_{i,N}^2 - R^{\phi, \psi}_2(X) \right)^2 \right] \\
&\quad = \sum_{i \neq j}^N \phi \left[ X_{i,N}^2 X_{j,N}^2 \right]+ \sum_{i=1}^N \phi \left[ X_{i,N}^{4} \right] - 2 R^{\phi, \psi}_2(X) \sum_{i=1}^N \phi \left[ X_{i,N}^{2} \right] + R^{\phi, \psi}_2(X)^2 \\
&\quad \stackrel{N \rightarrow \infty}{\longrightarrow} R^{\phi, \psi}_2(X)^2 + R^{\phi, \psi}_4(X) - 2 R^{\phi, \psi}_2(X)^2 + R^{\phi, \psi}_2(X)^2  = R^{\phi, \psi}_4(X). \qedhere
\end{split}
\]
\end{proof}

\begin{Prop}
\[
\lim_{N \rightarrow \infty} \psi \left[ \sum_{i=1}^N X_{i,N}^k \right] = R^\psi_k(X).
\]
We record this result symbolically as
\[
\int_0^T \psi \left[ (dX(t))^k \right] = R^\psi_k(X(T)).
\]
\end{Prop}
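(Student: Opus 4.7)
The plan is to reduce everything to a cumulant expansion of $\psi[X_{1,N}^k]$ and then observe that only the term corresponding to the one-block partition $\hat 1$ survives in the limit.

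First, by the stationarity assumption on the $\psi$-distribution of the increments,
\[
\psi\!\left[\sum_{i=1}^N X_{i,N}^k\right] = N\,\psi[X_{1,N}^k].
\]
Next, I would invoke the moment-cumulant formula for $\psi$ together with the cumulant scaling recorded in the Notation,
\[
R^\psi_{\abs{V}}(X_{1,N}) = \tfrac{1}{N} R^\psi_{\abs{V}}(X),
\]
which is valid since the increments are (in particular) $\psi$-freely independent and $\psi$-identically distributed. This yields
\[
\psi[X_{1,N}^k] = \sum_{\pi \in \NC(k)} \prod_{V \in \pi} R^\psi_{\abs{V}}(X_{1,N}) = \sum_{\pi \in \NC(k)} \frac{1}{N^{\abs{\pi}}} \prod_{V \in \pi} R^\psi_{\abs{V}}(X).
\]

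Multiplying by $N$ gives
\[
N\,\psi[X_{1,N}^k] = \sum_{\pi \in \NC(k)} \frac{1}{N^{\abs{\pi}-1}} \prod_{V \in \pi} R^\psi_{\abs{V}}(X).
\]
Every partition $\pi \in \NC(k)$ with $\abs{\pi} \geq 2$ contributes a term of order $O(1/N)$, while the unique partition with $\abs{\pi} = 1$, namely $\pi = \hat 1$, contributes exactly $R^\psi_k(X)$. Letting $N \to \infty$ gives the claim. There is no serious obstacle here; the only things one needs are stationarity of increments with respect to $\psi$ and the scaling of $\psi$-cumulants, both of which are already built into the setup.
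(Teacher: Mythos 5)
Your argument is correct and is exactly what the paper intends: it omits the proof, saying only that it is ``similar to the preceding theorem, using the $\psi$-free independence of the increments,'' and that preceding proof is precisely the cumulant expansion with the $1/N^{\abs{\pi}}$ scaling in which only the $\pi = \hat{1}$ term survives after multiplying by $N$. No further comment is needed.
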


The proof is similar to the preceding theorem, using the $\psi$-free independence of the increments of the process $\set{X(t)}$.

\begin{Cor}
\label{Cor:Variations}
For a two-state free Brownian motion, $R^\psi_k(X) = 0$ for $k > 2$ and $R^{\psi}_2(X(T)) = R^{\phi, \psi}_2(X(T))$. If $R^\psi_1(X(T)) = \alpha T$, we call the corresponding process a \emph{two-state free Brownian motion with parameter $\alpha$}.
\end{Cor}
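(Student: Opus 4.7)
The plan is to combine the preceding Proposition with Theorem~\ref{Thm:Expectations}. The Proposition, which relies only on the $\psi$-freeness of increments, gives
\[
R^\psi_k(X) \;=\; \lim_{N\to\infty} \psi\Bigl[\sum_{i=1}^N X_{i,N}^k\Bigr]
\]
for every $k \geq 1$. Applied to a two-state free Brownian motion, Theorem~\ref{Thm:Expectations} identifies the $L^2(\phi)$-limit of the same sum as $R^{\phi,\psi}_2(X)$ when $k = 2$ and as $0$ when $k > 2$. Provided these $L^2(\phi)$-limits transfer into limits of $\psi$-expectations, the two assertions $R^\psi_2(X) = R^{\phi,\psi}_2(X)$ and $R^\psi_k(X) = 0$ for $k > 2$ follow at once.

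The transfer step uses the analytic hypotheses. Since $\phi$ is faithful normal on the von Neumann algebra $\mc{A}$, its GNS vector is cyclic and separating; on operator-norm bounded sequences of self-adjoint elements, $L^2(\phi)$-convergence coincides with strong operator convergence and therefore implies $\sigma$-weak convergence. As $\psi$ is a normal state, it is continuous in the $\sigma$-weak topology, and so the $\psi$-expectations of any norm-bounded $L^2(\phi)$-convergent sequence converge to $\psi$ applied to the limit.

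The main obstacle is verifying the uniform operator-norm boundedness of $\bigl\{\sum_{i=1}^N X_{i,N}^k\bigr\}_N$, which does not follow from the $\phi$-moment estimates alone in the non-tracial setting: a sequence of self-adjoint elements can be bounded in $L^2(\phi)$ while its operator norms diverge. I would handle this by a spectral-truncation argument. A Chebyshev-type estimate under $\phi$ bounds the $\phi$-mass of the spectral projection of $\sum_i X_{i,N}^k$ onto large intervals; by normality of $\psi$ these projections have vanishing $\psi$-mass as well; and the remaining bounded truncations are controlled by the transfer argument above. Combining, one obtains $\lim_N \psi[\sum_i X_{i,N}^k] = \psi(L)$ where $L$ is the $L^2(\phi)$-limit from Theorem~\ref{Thm:Expectations}, which completes the proof.
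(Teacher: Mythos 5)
Your overall strategy is exactly the one the paper intends: the Corollary is stated without proof, as the combination of Theorem~\ref{Thm:Expectations} (the sums $S_N^{(k)}=\sum_{i=1}^N X_{i,N}^k$ converge in $L^2(\phi)$ to the scalar $0$ for $k>2$ and to $R^{\phi,\psi}_2(X)$ for $k=2$) with the Proposition ($\psi[S_N^{(k)}]\to R^\psi_k(X)$). You are also right to flag that passing $\psi$ through an $L^2(\phi)$-limit is not automatic for unbounded sequences, and your bounded-sequence transfer (cyclic and separating GNS vector, hence strong convergence, hence $\sigma$-weak convergence, hence convergence under the normal state $\psi$) is correct; this makes explicit a step the paper passes over in silence.

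The one soft spot is the tail of your truncation argument. Writing $E_N$ for the spectral projection of $S_N^{(k)}$ onto $\set{\abs{s-L}>M}$, Chebyshev under $\phi$ plus your bounded transfer does give $\psi[E_N]\to 0$, and the truncated part $S_N^{(k)}(1-E_N)$ is uniformly bounded and still converges to $L$ in $L^2(\phi)$, so it is handled by the transfer. But $\psi[E_N]\to 0$ alone does not control $\psi[S_N^{(k)}E_N]$: the operator $S_N^{(k)}E_N$ can have large norm exactly where $E_N$ lives. You need the Cauchy--Schwarz bound $\abs{\psi[S_N^{(k)}E_N]}\le\psi[(S_N^{(k)})^2]^{1/2}\psi[E_N]^{1/2}$ together with a uniform bound on $\psi[(S_N^{(k)})^2]$. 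Fortunately the latter follows from exactly the computation behind the Proposition: the $\psi$-free independence of the increments gives $\psi[(S_N^{(k)})^2]\to R^\psi_k(X)^2+R^\psi_{2k}(X)$, which is finite since $X(T)$ is a bounded operator. With that estimate inserted, your argument closes and yields the Corollary.
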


\begin{Cor}
\label{Corollary:Densities}
Denote by $\mu_t$ the distribution of $X(t)$ with respect to $\phi$, and by $\nu_t$ the corresponding distribution with respect to $\psi$. Then
\begin{equation}
\label{nu}
d\nu_t(x) = \frac{1}{2 \pi t} \sqrt{\Bigl( 4 t - (x - \alpha t)^2 \Bigr)_+} \,dx
\end{equation}
and for $\alpha \neq 0$,
\begin{equation}
\label{mu}
d\mu_t(x) = \frac{1}{2 \pi t} \frac{\sqrt{\Bigl( 4 t - (x - \alpha t)^2 \Bigr)_+}}{1 + \alpha x} \,dx + \max \left(1 - \frac{1}{\alpha^2 t}, 0 \right) \delta_{-1/\alpha}.
\end{equation}
Also denote $C_T = 1 + \alpha X(T)$. Then the distribution of $C_T$ with respect to $\phi$ is
\begin{equation}
\label{C_T-density}
\frac{1}{2 \pi} \frac{1}{\alpha^2 T} \frac{\sqrt{\Bigl(((1 + \alpha \sqrt{T})^2 - y) (y - (1 - \alpha \sqrt{T})^2) \Bigr)_+}}{y} \,dy + \max \left(1 - \frac{1}{\alpha^2 T}, 0 \right) \delta_0.
\end{equation}
\end{Cor}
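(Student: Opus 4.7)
The plan is to compute the two Cauchy transforms $G_{\nu_t}$ and $G_{\mu_t}$ explicitly, extract densities by Stieltjes inversion, and then push $\mu_T$ forward under the map $x \mapsto 1 + \alpha x$ to obtain (\ref{C_T-density}). The formula for $\nu_t$ is essentially immediate: Corollary~\ref{Cor:Variations} identifies the $\psi$-free cumulants of $X(t)$ as $R^\psi_1 = \alpha t$, $R^\psi_2 = t$, and $R^\psi_k = 0$ for $k \geq 3$, which characterizes $\nu_t$ as the semicircle law with mean $\alpha t$ and variance $t$, giving (\ref{nu}).

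The key step is to recognize that $\mu_t = \Phi_t[\nu_t]$, equivalently $G_{\mu_t}(z) = 1/(z - t G_{\nu_t}(z))$. Starting from the moment-cumulant expansion in the definition of $R^{\phi,\psi}$ and conditioning on the block of $\pi \in \NC(n)$ that contains the index $1$ (always outer, with the $k-1$ gaps between its elements filled by inner $\psi$-sub-partitions and the tail by an outer $\phi$-sub-partition), a standard recursion on non-crossing partitions yields the generating-function identity
\[
M^\psi(z)\bigl(M^\phi(z) - 1\bigr) = M^\phi(z)\, R^{\phi,\psi}\bigl(z M^\psi(z)\bigr),
\]
where $M^\phi, M^\psi$ are the moment generating functions of $X(t)$ with respect to $\phi$ and $\psi$. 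Because $R^{\phi,\psi}(w) = t w^2$ is purely quadratic, the right-hand side is $t z^2 M^\phi(z) M^\psi(z)^2$, so the identity collapses to $M^\phi(z) = 1/(1 - t z^2 M^\psi(z))$; converting to Cauchy transforms via $G(z) = z^{-1} M(z^{-1})$ yields the desired relation. From there Stieltjes inversion is routine: substituting $G_{\nu_t}(x + i 0) = \bigl((x - \alpha t) - i \sqrt{4t - (x - \alpha t)^2}\bigr)/(2t)$ and using the identity $(x + \alpha t)^2 + \bigl(4t - (x - \alpha t)^2\bigr) = 4t(1 + \alpha x)$ produces the continuous density in (\ref{mu}). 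The atom comes from looking for a pole: combining $z = t G_{\nu_t}(z)$ with the semicircle quadratic $t G_{\nu_t}^2 - (z - \alpha t) G_{\nu_t} + 1 = 0$ forces $z = -1/\alpha$, and the residue, computed by implicit differentiation, is $1 - 1/(\alpha^2 t)$.

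The subtle point in the atom calculation is that for $\alpha^2 t < 1$ the branch of $G_{\nu_t}$ analytic at $\infty$ takes the value $-\alpha$ (not $-1/(\alpha t)$) at $z = -1/\alpha$, so $z - t G_{\nu_t}(z) \neq 0$ there and no atom appears; this is exactly what the $\max$ in (\ref{mu}) records. Finally, pushing $\mu_T$ forward under $y = 1 + \alpha x$ and setting $\lambda = \alpha^2 T$ yields (\ref{C_T-density}) via the factorization
\[
4T - (x - \alpha T)^2 = \alpha^{-2}\bigl((1 + \sqrt{\lambda})^2 - y\bigr)\bigl(y - (1 - \sqrt{\lambda})^2\bigr)
\]
together with $1 + \alpha x = y$ and $dx = dy/\abs{\alpha}$, and the atom at $-1/\alpha$ maps to one at $0$ of the same weight. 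The main technical obstacle is the derivation of the generating-function identity for $G_{\mu_t}$: the collapse to a Jacobi-shift form happens only because $R^{\phi,\psi}$ is quadratic, and the general two-state moment-cumulant relation is considerably more involved.
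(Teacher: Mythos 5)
Your argument is correct, and it reaches the same intermediate waypoints as the paper --- identify $\nu_t$ as the semicircle law with mean $\alpha t$ and variance $t$ via Corollary~\ref{Cor:Variations}, establish $\mu_t = \Phi_t[\nu_t]$, and obtain \eqref{C_T-density} by the substitution $y = 1 + \alpha x$ --- but it executes the two key middle steps quite differently. Where you derive the relation $G_{\mu_t}(z) = 1/(z - t\,G_{\nu_t}(z))$ from scratch, by conditioning the two-state moment--cumulant expansion on the block containing the first index to get $M^\psi(z)(M^\phi(z)-1) = M^\phi(z)\,R^{\phi,\psi}(zM^\psi(z))$ and then specializing to quadratic $R^{\phi,\psi}$, the paper simply cites Lemma~7 of the c-free Appell polynomials paper for the implication $R^{\mu_t,\nu_t}(z) = tz^2 \Rightarrow \mu_t = \Phi_t[\nu_t]$; your identity is the correct generating-function form of the Bo\.{z}ejko--Leinert--Speicher $R$-transform relation (in the paper's normalization where $R$ carries an extra factor of $z$), and your sketch of its combinatorial derivation is sound. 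Likewise, where you extract the density \eqref{mu} by explicit Stieltjes inversion --- including the correct algebraic identity $(x+\alpha t)^2 + (4t - (x-\alpha t)^2) = 4t(1+\alpha x)$, the residue computation $1 - 1/(\alpha^2 t)$ at the pole $z = -1/\alpha$, and the genuinely subtle branch analysis explaining why the atom appears only for $\alpha^2 t > 1$ --- the paper instead reads the Jacobi parameters $(0,\alpha t,\alpha t,\ldots;\,t,t,\ldots)$ off the coefficient-stripping relation and appeals to the known free-Poisson-type density. Your route is longer but self-contained and makes visible exactly where the $\max(1 - 1/(\alpha^2 t),0)$ cutoff comes from, which the paper leaves implicit; the paper's route is shorter at the cost of two external citations.
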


\begin{proof}
Since $\set{X(t)}$ is a two-state free Brownian motion, the two-state free cumulant generating function is
\begin{equation}
\label{R-mu-nu}
R^{\mu_t, \nu_t}(z) = t z^2.
\end{equation}
Moreover, the preceding corollary implies that the free cumulant generating function of $\nu_t$ is
\[
R^{\nu_t}(z) = \alpha t z + t z^2
\]
for some $\alpha$. Therefore $\nu_t$ is a semicircular distribution with Jacobi parameters
\[
J(\nu_t) =
\begin{pmatrix}
\alpha t, & \alpha t, & \alpha t, & \alpha t, & \ldots \\
t, & t, & t, & t, & \ldots
\end{pmatrix}
\]
and formula~\eqref{nu} holds. If $\alpha = 0$, then $\mu_t = \nu_t$ and the process is the free Brownian motion, so throughout the rest of the paper we will assume that $\alpha \neq 0$. By Lemma~7 of \cite{AnsAppell3}, equation~\eqref{R-mu-nu} implies that $\mu_t = \Phi_t[\nu_t]$, so $\mu_t$ is a free Poisson distribution with Jacobi parameters
\begin{equation}
\label{Jacobi-mu}
J(\mu_t) =
\begin{pmatrix}
0, & \alpha t, & \alpha t, & \alpha t, & \ldots \\
t, & t, & t, & t, & \ldots
\end{pmatrix}.
\end{equation}
This implies the density formula~\eqref{mu}. Note that even though these distributions are free Poisson, $t$ is not the free convolution parameter. The last formula follows via the substitution $y = 1 + \alpha x$.
\end{proof}

\begin{Remark}
\label{Remark:Bi-Poisson}
The classical version of the process $\set{X(t)}$ is a particular case of the free Bi-Poisson process from \cite{Bryc-Wesolowski-Bi-Poisson}, corresponding to $\eta = \alpha$ and the rest of the parameters equal zero. In fact, this process is a time-reversed free Poisson process: if $Y(t) = t X(1/t)$, then the $\psi$-distribution of $Y(t)$ has Jacobi parameters
\[
\begin{pmatrix}
\alpha, & \alpha, & \alpha, & \alpha, & \ldots \\
t, & t, & t, & t, & \ldots
\end{pmatrix}
\]
and so is semicircular with mean $\alpha$ and variance $t$, and its $\phi$-distribution $\tau_t$ has Jacobi parameters
\begin{equation*}
J(\tau_t) =
\begin{pmatrix}
0, & \alpha, & \alpha, & \alpha, & \ldots \\
t, & t, & t, & t, & \ldots
\end{pmatrix}.
\end{equation*}
So $\tau_t$ is the centered free Poisson distribution with parameter $\alpha$, and $\set{\tau_t}$ form a free (rather than a two-state free) convolution semigroup.
\end{Remark}

\begin{Prop}
The process $\set{Y(t)}$ itself is \emph{not} the free Poisson process.
\end{Prop}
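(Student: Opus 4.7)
By Remark~\ref{Remark:Bi-Poisson} the one-dimensional $\phi$-distributions of $\set{Y(t)}$ already agree with those of the centered free Poisson process of parameter $\alpha$, so the two processes can only be separated via a joint distribution at two different times. The plan is to compute the third $\phi$-moment of a single increment $Y(t) - Y(s)$ for $0 < s < t$ and show that it vanishes, whereas a bona fide free Poisson increment would have distribution $\tau_{t-s}$ with third moment $\alpha(t-s) \neq 0$.

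Fix $0 < s < t$ in the valid parameter range and set $A = X(1/t)$, $B = X(1/s) - X(1/t)$. As disjoint-interval increments of $\set{X(r)}$, $A$ and $B$ are two-state freely independent, so all mixed two-state cumulants in them vanish. Since $Y(t) = tA$ and $Y(s) = s(A+B)$,
\[
Y(t) - Y(s) = (t-s)A - sB.
\]
I would expand $\phi[((t-s)A - sB)^3]$ by multilinearity into the eight mixed $\phi$-moments of total degree three and evaluate each via the two-state moment--cumulant formula from the preliminaries, feeding in the three inputs $R^{\phi,\psi}_n(X(r)) = 0$ for $n > 2$, $R^{\phi,\psi}_2(X(r)) = r$, and $R^\psi_1(X(r)) = \alpha r$.

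I expect the pure powers to give $\phi[A^3] = \alpha/t^2$ and $\phi[B^3] = \alpha(1/s - 1/t)^2$, each coming from the single partition $\set{\set{1,3}, \set{2}}$ whose inner singleton is weighted by $R^\psi_1$. The four asymmetric mixed moments $\phi[A^2B]$, $\phi[BA^2]$, $\phi[AB^2]$, $\phi[B^2A]$ should all vanish, because every non-crossing partition that could contribute to them carries either a mixed two-state cumulant or the factor $\phi[A] = 0$ or $\phi[B] = 0$. The two remaining moments, however, are forced to be nonzero by the inner-block rule,
\[
\phi[ABA] = R^{\phi,\psi}_2(A)\, R^\psi_1(B) = \frac{\alpha}{t}\Bigl(\frac{1}{s} - \frac{1}{t}\Bigr), \qquad \phi[BAB] = R^{\phi,\psi}_2(B)\, R^\psi_1(A) = \frac{\alpha}{t}\Bigl(\frac{1}{s} - \frac{1}{t}\Bigr).
\]
Assembling all contributions with their signs from $((t-s)A - sB)^3$ and using $1/s - 1/t = (t-s)/(st)$, the four surviving terms cancel in pairs, yielding $\phi[(Y(t) - Y(s))^3] = 0$. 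This contradicts the free-Poisson value $\alpha(t-s) \neq 0$ and so proves that $\set{Y(t)}$ is not the free Poisson process.

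The main obstacle is conceptual rather than computational: one must recognize that two-state freeness of $A$ and $B$ is strictly weaker than $\phi$-freeness, so that $\phi[ABA]$ and $\phi[BAB]$ do not vanish despite $\phi[A] = \phi[B] = 0$. These are precisely the moments in which the nonzero $\psi$-mean $R^\psi_1(X(r)) = \alpha r$ of the two-state free Brownian motion leaks into a $\phi$-calculation through the inner-singleton weight of the two-state moment--cumulant formula; once that is noticed, the rest is a short expansion and sign check.
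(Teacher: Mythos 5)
Your computation is correct: with $A = X(1/t)$, $B = X([1/t,1/s))$ and $Y(t)-Y(s) = (t-s)A - sB$, the only surviving terms in the two-state moment--cumulant expansion of the third moment are the two pure cubes and the two ``sandwich'' moments $\phi[ABA] = R^{\phi,\psi}_2(A)R^\psi_1(B)$ and $\phi[BAB] = R^{\phi,\psi}_2(B)R^\psi_1(A)$, and these cancel in pairs to give $\phi[(Y(t)-Y(s))^3]=0$, whereas the increment of the free Poisson process would have distribution $\tau_{t-s}$ with third moment $\alpha(t-s)\neq 0$. This is a valid proof, and your emphasis on the sandwich moments being nonzero despite $\phi[A]=\phi[B]=0$ is exactly the right conceptual point. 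The paper uses the same decomposition of the increment but argues at the level of generating functions rather than a single moment: it computes $R^\psi_{Y(t)-Y(s)}(z)$ and $R^{\phi,\psi}_{Y(t)-Y(s)}(z)$, each equal to $(t-s)z^2$ by additivity of cumulants under (two-state) free independence, and concludes that $Y(t)-Y(s)$ is exactly centered semicircular of variance $t-s$ with respect to $\phi$. That route yields the full law of the increment (and in particular that \emph{every} odd moment, not just the third, distinguishes it from $\tau_{t-s}$), at the cost of invoking the additivity of the two generating functions; yours is more elementary, needing only the $n=3$ moment--cumulant formula, but certifies less about the increment. Either argument settles the proposition, given the standing assumption $\alpha\neq 0$ in this section.
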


\begin{proof}
We compute, for $s < t$,
\[
Y(t) - Y(s) = t X(1/t) - s X(1/s)
= (t-s) X(1/t) - s X([1/t, 1/s)).
\]
The free cumulant generating function of the $\psi$-distribution of $(t-s) X(1/t)$ is
\[
R^\psi_{(t-s) X(1/t)}(z) = \alpha \frac{t-s}{t} z + \frac{(t-s)^2}{t} z^2
= \alpha \left( 1 - \frac{s}{t} \right) z + \left(t - 2 s + \frac{s^2}{t} \right) z^2.
\]
Similarly, the free cumulant generating function of the $\psi$-distribution of $- s X([1/t, 1/s))$ is the same as that of $-s X(1/s - 1/t)$, in other words
\[
R^\psi_{- s X([1/t, 1/s))}(z) = \alpha (-s) \left( \frac{1}{s} - \frac{1}{t} \right) z + s^2 \left( \frac{1}{s} - \frac{1}{t} \right) z^2
= \alpha \left( -1 + \frac{s}{t} \right) z + \left( s - \frac{s^2}{t} \right) z^2.
\]
Since $(t-s) X(1/t)$ and $- s X([1/t, 1/s))$ are freely independent with respect to $\psi$, it follows that the free cumulant generating function of $Y(t) - Y(s)$ is the sum
\[
R^\psi_{(t-s) X(1/t)}(z) + R^\psi_{- s X([1/t, 1/s))}(z) = (t-s) z^2.
\]

Similarly,
\[
R^{\phi, \psi}_{(t-s) X(1/t)}(z) = \frac{(t-s)^2}{t} z^2
= \left(t - 2 s + \frac{s^2}{t} \right) z^2,
\]
\[
R^{\phi, \psi}_{- s X([1/t, 1/s))}(z) = s^2 \left( \frac{1}{s} - \frac{1}{t} \right) z^2
= \left( s - \frac{s^2}{t} \right) z^2,
\]
and
\[
R^{\phi, \psi}_{(t-s) X(1/t)}(z) + R^{\phi, \psi}_{- s X([1/t, 1/s))}(z) = (t-s) z^2.
\]
We conclude that $Y(t) - Y(s)$ has, with respect to $\phi$, the centered semicircular distribution with variance $t-s$, which is clearly different from the distribution of $Y(t-s)$.
\end{proof}

\begin{Prop}
Let $\set{X(t): 0 \leq t \leq S}$, $S > T$ be a process with two-state freely independent increments and $A \in \mc{A}^{sa}$ two-state free from it. Assume that $\psi[A] = 0$. Then in $L^2(\phi)$,
\[
\lim_{N \rightarrow \infty} \sum_{i=1}^N  X_{i,N} \, A \, X_{i,N} = 0.
\]
Symbolically, $\int_0^T  dX(t) \, A \,dX(t) = 0$.
\end{Prop}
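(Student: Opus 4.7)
The plan is to show $\phi\bigl[\bigl(\sum_{i=1}^N X_{i,N} A X_{i,N}\bigr)^2\bigr] \to 0$. Expanding the square gives a diagonal piece
\[
D_N = \sum_{i=1}^N \phi\bigl[X_{i,N} A X_{i,N}^2 A X_{i,N}\bigr]
\]
and an off-diagonal piece
\[
O_N = \sum_{i \neq j} \phi\bigl[X_{i,N} A X_{i,N} X_{j,N} A X_{j,N}\bigr].
\]
I would expand each $\phi$-value via the two-state moment-cumulant formula, as a sum over $\NC(6)$ with $R^{\phi,\psi}$-weights on outer blocks and $R^\psi$-weights on inner blocks. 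Since $A$ is two-state free from the process and the increments over disjoint intervals are two-state free from each other, the Bo\.zejko-Leinert-Speicher theorem forces every surviving block to lie inside one of the relevant algebras (call such partitions \emph{monochromatic}). Throughout I would use the vanishings $R^{\phi,\psi}_1(X_{i,N}) = \phi[X_{i,N}] = 0$ (from the definition of the process) and $R^\psi_1(A) = \psi[A] = 0$ (by hypothesis).

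For $O_N$, fix $i \neq j$ and label the arguments $(X_{i,N}, A, X_{i,N}, X_{j,N}, A, X_{j,N})$ at positions $1, \ldots, 6$. Position $1$ always lies in an outer block; if that block were the singleton $\set{1}$, its factor $R^{\phi,\psi}_1(X_{i,N})$ would vanish. Hence the block of position $1$ must have size at least $2$, and monochromaticity forces it to be $\set{1, 3}$. By symmetry, the block of position $6$ must be $\set{4, 6}$. For the $A$-positions $\set{2, 5}$: the pair $\set{2, 5}$ crosses $\set{1, 3}$ (witness $1 < 2 < 3 < 5$), while the singleton $\set{2}$ is enclosed by $\set{1, 3}$ and thus inner, with factor $R^\psi_1(A) = 0$. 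Therefore $O_N = 0$ identically.

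For $D_N$, each summand has arguments $(X_{1,N}, A, X_{1,N}, X_{1,N}, A, X_{1,N})$ at positions $1, \ldots, 6$, and I would split on the $A$-part of the partition. If the $A$-blocks are $\set{2}$ and $\set{5}$, a nonzero contribution would require both to be outer; but $\set{2}$ being outer forbids any $X$-block from spanning position $2$, forcing $\set{1}$ to be an isolated $X$-singleton, which is outer and contributes $R^{\phi,\psi}_1(X_{1,N}) = 0$. If instead the $A$-block is $\set{2, 5}$, non-crossingness restricts the $X$-blocks to be chosen among $\set{1}, \set{6}, \set{1,6}, \set{3}, \set{4}, \set{3,4}$, leaving only four candidate $X$-partitions of $\set{1, 3, 4, 6}$; two of them still contain $\set{1}$ as a singleton and vanish as above. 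The remaining two have $X$-blocks $\set{1,6}, \set{3,4}$ or $\set{1,6}, \set{3}, \set{4}$. Using $R^\psi_k(X_{1,N}) = \frac{1}{N} R^\psi_k(X)$ and $R^{\phi,\psi}_k(X_{1,N}) = \frac{1}{N} R^{\phi,\psi}_k(X)$, their total contribution to $D_N$ is
\[
\frac{R^{\phi,\psi}_2(X)\, R^\psi_2(X)\, R^\psi_2(A)}{N} + \frac{R^{\phi,\psi}_2(X)\, R^\psi_1(X)^2\, R^\psi_2(A)}{N^2},
\]
both terms of which tend to $0$ as $N \to \infty$.

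The main obstacle is simply the combinatorial bookkeeping: enumerating the non-crossing monochromatic partitions of the six positions and correctly assigning inner/outer status to every block. The proof is greatly streamlined by two structural observations — the leftmost position can never be a singleton inner block, and $\phi[X_{i,N}] = 0$ together with $\psi[A] = 0$ immediately kill nearly every configuration — which reduces the off-diagonal sum to $0$ and leaves only two diagonal partitions to estimate.
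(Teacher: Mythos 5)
Your proof is correct and takes essentially the same route as the paper: expand $\phi\bigl[\bigl(\sum_{i=1}^N X_{i,N}AX_{i,N}\bigr)^2\bigr]$ into off-diagonal and diagonal parts, kill every off-diagonal term via the two-state cumulant expansion using $\phi[X_{i,N}]=0$ and $\psi[A]=0$, and show the diagonal part is $O(1/N)$. Your explicit enumeration in fact yields the diagonal term $\phi[X_{i,N}^2]\,\psi[X_{i,N}^2]\,\psi[A^2]$, which is the correct value for a general process with two-state freely independent increments (the paper writes $\phi[X_{i,N}^2]^2\,\psi[A^2]$, which agrees only when $R^{\psi}_2=R^{\phi,\psi}_2$); either way the conclusion $\to 0$ is unaffected.
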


\begin{proof}
\[
\phi \left[ \left(\sum_{i=1}^N  X_{i,N} \, A \, X_{i,N} \right)^2 \right]
= \sum_{i \neq j}^N \phi[ X_{i,N} A X_{i,N} X_{j,N} A X_{j,N}] + \sum_{i=1}^N \phi[ X_{i,N} A X_{i,N}^2 A X_{i,N}]
\]
For the first term, since $X_{i,N}$ are $\phi$-centered, $A$ is $\psi$-centered, and they are two-state freely independent among themselves, a cumulant expansion shows that each term of the sum is zero. For the second term,
\[
\phi[ X_{i,N} A X_{i,N}^2 A X_{i,N}]
= \phi[X_{i,N}^2]^2 \psi[A^2]
= \frac{1}{N^2} R^{\phi, \psi}(X)^2 \psi[A^2].
\]
So as $N \rightarrow \infty$, both terms above converge to zero.
\end{proof}

\begin{Cor}
If $A$ is two-state free from $\set{X(t): S_1 \leq t \leq S_2}$, then
\[
\int_{S_1}^{S_2} dX(t) \, A \, dX(t) = \psi[A] \int_{S_1}^{S_2} (dX(t))^2.
\]
In particular, for a two-state free Brownian motion,
\[
\psi[A] = \frac{1}{S_2 - S_1} \int_{S_1}^{S_2} dX(t) \, A \, dX(t).
\]
\end{Cor}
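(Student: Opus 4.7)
The plan is to reduce to the preceding proposition by recentering $A$. Write
\[
A = A_0 + \psi[A] \cdot \mathbf{1}, \qquad A_0 = A - \psi[A] \cdot \mathbf{1},
\]
so that $\psi[A_0] = 0$. Since two-state freeness is a property of the unital subalgebras generated by the random variables, and $A_0$ lies in the same unital subalgebra as $A$, the element $A_0$ remains two-state free from the process $\set{X(t) : S_1 \le t \le S_2}$.

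Next I would use the (evident) bilinearity of the Riemann sum approximations $\sum_i X_{i,N} A X_{i,N}$ in $A$ to split
\[
\sum_{i=1}^N X_{i,N} \, A \, X_{i,N} = \sum_{i=1}^N X_{i,N} \, A_0 \, X_{i,N} + \psi[A] \sum_{i=1}^N X_{i,N}^2,
\]
and pass to the $L^2(\phi)$ limit. The first sum converges to $0$ by the preceding proposition (applied to $A_0$, translated to the interval $[S_1, S_2]$, which is legitimate by stationarity of the increments). The second converges to $\psi[A] \int_{S_1}^{S_2} (dX(t))^2$ by the definition of this symbolic integral. This establishes the first displayed identity.

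For the ``in particular'' clause, for a two-state free Brownian motion Theorem~\ref{Thm:Expectations} (applied on $[S_1, S_2]$, again by stationarity) gives $\int_{S_1}^{S_2} (dX(t))^2 = S_2 - S_1$ in $L^2(\phi)$. Substituting this into the first identity and dividing by $S_2 - S_1$ yields the stated formula for $\psi[A]$. I do not anticipate any real obstacle here: the only slightly delicate point is ensuring that the additive shift $A \mapsto A_0$ preserves two-state freeness and that the $L^2(\phi)$ convergence of the two pieces can be combined additively, both of which are routine.
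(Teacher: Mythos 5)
Your proof is correct and is precisely the centering argument the paper leaves implicit: the corollary is stated without proof as an immediate consequence of the preceding proposition, and decomposing $A = A_0 + \psi[A]\cdot 1$ with $\psi[A_0]=0$, then using linearity and Theorem~\ref{Thm:Expectations}, is exactly the intended route. No gaps.
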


\begin{Cor}
Let $\set{X(t): 0 \leq t \leq T}$ be a process with two-state freely independent increments in $(\mc{A}, \phi, \psi)$, which generates $\mc{A}$. Then $\psi$ is uniquely determined by $\phi$ and the process.
\end{Cor}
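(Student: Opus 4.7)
The plan is to use the preceding corollary to determine $\psi$ on a $\sigma$-weakly dense $*$-subalgebra of $\mc{A}$, and then invoke the normality of $\psi$. The key point is that the preceding corollary, at the level of $\phi$-expectations of a single increment, gives an explicit formula for $\psi[A]$ in terms of $\phi$-moments whenever $A$ is two-state free from the increment in question; by refining the time partition, this can be applied to compute any polynomial moment.

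Specifically, when $X = X([S_1,S_2))$ and $A$ are two-state free with $\phi[X]=0$, the non-crossing partition expansion of $\phi[XAX]$ (the $N=1$ case of what enters the proof of the preceding corollary) collapses to the single partition with outer block $\set{1,3}$ and inner singleton $\set{2}$, giving $\phi[XAX] = R^{\phi,\psi}_2(X)\,\psi[A]$. Since $R^{\phi,\psi}_2(X) = \phi[X^2]$, this rearranges to
\[
\psi[A] = \frac{\phi\bigl[X([S_1,S_2))\, A\, X([S_1,S_2))\bigr]}{\phi\bigl[X([S_1,S_2))^2\bigr]},
\]
with denominator strictly positive whenever the process is nontrivial (if $\phi[X(T)^2]=0$, faithfulness of $\phi$ forces $X(t)=0$ for all $t$ and the result is vacuous). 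Now let $P$ be any polynomial in $X(t_1),\ldots,X(t_k)$, choose a partition $0=s_0<s_1<\cdots<s_n=T$ with $n\geq 2$ containing each $t_i$, and put $Y_j=X([s_{j-1},s_j))$. Each $X(t_i)$ is a partial sum of the $Y_j$'s, so $P$ is a polynomial in the mutually two-state free family $\set{Y_1,\ldots,Y_n}$. The $\psi$-freeness of this family expresses $\psi[P]$ as a polynomial in the single-variable moments $\psi[Y_j^p]$, and each such moment is given by the displayed formula with $A=Y_j^p$ and $[S_1,S_2)=[s_{j'-1},s_{j'})$ for any $j'\neq j$, which exists since $n\geq 2$.

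Thus $\psi[P]$ is determined by $\phi$ and the process for every polynomial $P$ in $\set{X(t):0\leq t\leq T}$. Since these polynomials form a $*$-subalgebra which is $\sigma$-weakly dense in $\mc{A}$ (by the hypothesis that $\set{X(t)}$ generates $\mc{A}$ as a von Neumann algebra) and $\psi$ is normal, $\psi$ is uniquely determined on $\mc{A}$. The only subtle step is the reservation of a ``test'' increment $Y_{j'}$: one cannot apply the formula with $[S_1,S_2)=[0,T)$ and $A=X(T)$, since $X(T)$ is not two-state free from itself, so the partition must always contain at least two subintervals--a requirement automatically met since $P$ involves only finitely many times.
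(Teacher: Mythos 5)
Your proof is correct and follows essentially the same route as the paper: both extract $\psi[A]$ from the $\phi$-expectation of $A$ sandwiched between increments two-state free from it (the content of the preceding corollary), and then conclude by density of a generating subalgebra together with normality of $\psi$. The only differences are cosmetic --- the paper applies the stochastic-integral identity $\int_S^T dX(t)\,A\,dX(t) = \psi[A]\int_S^T (dX(t))^2$ to $A \in W^\ast(\set{X(t): t < S})$ and takes the union over $S < T$, whereas you use the exact single-increment identity $\phi[XAX] = \phi[X^2]\,\psi[A]$ (no limit needed) together with the reduction of arbitrary polynomial moments to marginal $\psi$-moments via $\psi$-freeness of the increments, which has the minor advantage of handling elements involving the endpoint $t = T$ directly.
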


\begin{proof}
For each $S$, for $A \in W^\ast(\set{X(t): t < S})$ we have
\[
\int_S^T dX(t) \, A \, dX(t) = \psi[A] \int_S^T (dX(t))^2,
\]
the integrals being defined in $L^2(\phi)$. But
\[
\bigcup_{0 \leq S < T} W^\ast(\set{X(t): t < S})
\]
is dense in $\mc{A}$.
\end{proof}

\section{Existence proof}
\label{Section:Existence}

While the results in this section do not follow directly from \cite{Ricard-t-Gaussian}, numerous arguments in it are adapted from that article.

If $\alpha = 0$, the process is the free Brownian motion. So in the following discussion, we assume that $\alpha \neq 0$.

\begin{Remark}[Fock space construction]
For $\mc{H} = L^2(\mf{R}_+, \,dx)$, denote by
\[
\mc{F}(\mc{H}) = \bigoplus_{n=0}^\infty \mc{H}_{\mf{C}}^{\otimes n} = \mf{C} \Omega \oplus \mc{H}_{\mf{C}} \oplus \mc{H}_{\mf{C}}^{\otimes 2} \oplus \ldots
\]
its full Fock space, with the standard inner product. For each $g \in L^1(\mf{R}_+, \,dx) \cap L^2(\mf{R}_+, \,dx)$, define on $\mc{F}(\mc{H})$ operators
\begin{equation}
\label{Fock-rep}
\begin{split}
X(g) \Omega & = g, \\
X(g) (f_1 \otimes f_2 \otimes \ldots \otimes f_n)
& = g \otimes f_1 \otimes f_2 \otimes \ldots \otimes f_n \\
&\quad + \alpha \langle g \rangle f_1 \otimes f_2 \otimes \ldots \otimes f_n \\
&\quad + \ip{g}{f_1} f_2 \otimes \ldots \otimes f_n,
\end{split}
\end{equation}
where $\langle g \rangle = \int_{\mf{R}_+} g(x) \,dx$. In other words, on any tensor of length at least one, $X(g)$ acts in the same way as $a^+(g) + a^-(g) + \alpha \langle g \rangle$, where $a^+, a^-$ are the free creation and annihilation operators. Denote $X(I) = X(\chf{I})$ for an interval $I$, and $X(t) = X([0,t))$.

Denote $\mc{A} = W^\ast(\set{X(t): t \geq 0})$ and on $\mc{A}$, define the state $\phi$ by
\[
\phi[A] = \ip{\Omega}{A \Omega}.
\]
Denote $C_T = 1 + \alpha X(T)$, and define the functional $\psi_T$ by
\[
\psi_T[A] = \phi[A C_T].
\]
Note that $\psi_T[1] = \phi[C_T] = 1$. We will show that $\psi_T$ is positive, and so a state.
\end{Remark}

\begin{Lemma}
\label{Lemma:OP}
Let $\set{P_n(x,t)}$ be the monic orthogonal polynomials with respect to $\nu_t$, and $\set{Q_n(x,t)}$ the monic orthogonal polynomials with respect to $\mu_t$. Then
\begin{enumerate}
\item
$P_n(x,t) = U_n(x - \alpha t, t)$, where $U_n$ are the Chebyshev polynomials of the second kind.
\item
$Q_n(x,t) = P_n(x,t) + \alpha t P_{n-1}(x,t)$ and
\[
(1 + \alpha x) P_n(x,t), = \alpha Q_{n+1}(x,t) + Q_n(x,t).
\]
\item
For any $n \geq 0$ and any $t$,
\[
\int_{\mf{R}^n} x^n \,d\nu_t(x) = \int_{\mf{R}} x^n (1 + \alpha x) \,d\mu_t(x).
\]
\end{enumerate}
\end{Lemma}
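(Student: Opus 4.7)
The plan is to extract all three parts from the three-term recursions coming from the Jacobi parameters of $\nu_t$ and $\mu_t$ computed in Corollary~\ref{Corollary:Densities}; no further analytic information about the measures is needed, and in particular I would avoid Cauchy-transform manipulations.

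For (a), the (dilated, monic) Chebyshev polynomials of the second kind $U_n(y,t)$ are defined by the recursion $y U_n(y,t) = U_{n+1}(y,t) + t U_{n-1}(y,t)$ with $U_0 = 1$, $U_1 = y$. Substituting $y = x - \alpha t$, the polynomials $U_n(x - \alpha t, t)$ satisfy $x U_n = U_{n+1} + \alpha t U_n + t U_{n-1}$, which is precisely the recursion from $J(\nu_t)$; by uniqueness of monic orthogonal polynomials, $P_n(x,t) = U_n(x - \alpha t, t)$. For (b), set $\widetilde Q_n = P_n + \alpha t P_{n-1}$, with $P_{-1} = 0$. Then $\widetilde Q_0 = 1$ and $\widetilde Q_1 = (x - \alpha t) + \alpha t = x$, and expanding $x \widetilde Q_n = x P_n + \alpha t \, x P_{n-1}$ via the $P_n$ recursion and collecting terms yields exactly $\widetilde Q_{n+1} + \alpha t \widetilde Q_n + t \widetilde Q_{n-1}$, which matches the recursion from $J(\mu_t)$ (with $\beta_0 = 0$, $\beta_n = \alpha t$ for $n \geq 1$, $\gamma_n = t$). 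Hence $Q_n = \widetilde Q_n$. The second identity of (b), $(1 + \alpha x) P_n = \alpha Q_{n+1} + Q_n$, is then verified by expanding each side in $\set{P_{n+1}, P_n, P_{n-1}}$: using the $P_n$ recursion for $\alpha x P_n$ on the left and $Q_k = P_k + \alpha t P_{k-1}$ on the right, both sides reduce to $\alpha P_{n+1} + (1 + \alpha^2 t) P_n + \alpha t P_{n-1}$.

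For (c), I would compare the two linear functionals on polynomials $L_1(p) = \int p\,d\nu_t$ and $L_2(p) = \int p(1 + \alpha x)\,d\mu_t$. They agree at $p = 1$: both equal $1$, using that $\mu_t$ has mean zero since $\beta_0 = 0$ in $J(\mu_t)$. For $n \geq 1$, orthogonality of $P_n$ to constants under $\nu_t$ gives $L_1(P_n) = 0$, while the identity from (b) gives $L_2(P_n) = \alpha \int Q_{n+1}\,d\mu_t + \int Q_n\,d\mu_t = 0$ by orthogonality of $Q_{n+1}, Q_n$ to constants under $\mu_t$. Since $\set{P_n}$ is a basis of the polynomials, $L_1 = L_2$, and specialising to $p = x^n$ gives (c). The computations throughout are purely mechanical; the only mild point of care is the $\beta_0$ anomaly distinguishing $\mu_t$ from $\nu_t$ and the convention $P_{-1} = 0$, so I do not anticipate a real obstacle.
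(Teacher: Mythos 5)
Your proposal is correct. For parts (a) and (b) you do exactly what the paper does: read off the three-term recursions from the Jacobi parameters $J(\nu_t)$ and $J(\mu_t)$ of Corollary~\ref{Corollary:Densities} and match them against the recursions defining $U_n(x-\alpha t, t)$ and $P_n + \alpha t P_{n-1}$; the paper's proof is just a terser version of your computation. For part (c) you diverge: the paper deduces the moment identity directly from the explicit density formulas \eqref{nu} and \eqref{mu}, where $(1+\alpha x)\,d\mu_t$ visibly reproduces $d\nu_t$ and the possible atom of $\mu_t$ at $-1/\alpha$ is killed by the factor $1+\alpha x$ (this is the point of the paper's remark that the statement survives the atom). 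You instead compare the two functionals $L_1, L_2$ on the basis $\set{P_n}$, using orthogonality together with the identity $(1+\alpha x)P_n = \alpha Q_{n+1} + Q_n$ from part (b). Your route is purely algebraic and handles the atom automatically, since the identity from (b) is a polynomial identity valid at every point of the support; the paper's route is shorter given that the densities have already been computed, but requires the separate observation about the atom. Both are complete proofs.
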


\begin{proof}
The proofs of parts (a) and (b) follow from the recursion relations
\[
x P_n(x,t) = P_{n+1}(x,t) + \alpha t P_n(x,t) + t P_{n-1}(x,t)
\]
and $x Q_0(x,t) = Q_1(x,t)$,
\[
x Q_n(x,t) = Q_{n+1}(x,t) + \alpha t Q_n(x,t) + t Q_{n-1}(x,t).
\]
The proof of part (c) follows from the density formulas of Corollary~\ref{Corollary:Densities}; note that the statement remains true even if $\mu_t$ has an atom.
\end{proof}

\begin{Lemma}
\label{Lemma:Fock-product}
Let $I_1, I_2, \ldots, I_n \subset [0,T]$ be intervals such that all $I_i \cap I_{i+1} = \emptyset$.
Then
\[
P_{k(1)}\bigl(X(I_1), \abs{I_1}\bigr) \ldots P_{k(n-1)}\bigl(X(I_{n-1}), \abs{I_{n-1}}\bigr) Q_{k(n)}\bigl(X(I_n), \abs{I_n}\bigr) \Omega
= \chf{I_1}^{\otimes k(1)} \otimes \ldots \otimes \chf{I_n}^{\otimes k(n)}.
\]
In particular,
\[
\phi \Bigl[ P_{k(1)}\bigl(X(I_1), \abs{I_1}\bigr) \ldots P_{k(n-1)}\bigl(X(I_{n-1}), \abs{I_{n-1}}\bigr) Q_{k(n)}\bigl(X(I_n), \abs{I_n}\bigr) \Bigr]
= 0.
\]
\end{Lemma}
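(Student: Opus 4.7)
The plan is to prove the tensor identity by a double induction --- on $n$ outside, on $k(1)$ inside --- driven by the three-term recursions of Lemma~\ref{Lemma:OP} and the explicit Fock-space action \eqref{Fock-rep}. The key observation is that for any interval $I$ with $t=|I|$ and any $w\in\mc{F}(\mc{H})$ whose leading tensor factor is orthogonal to $\chf{I}$ (or $w=\Omega$), equation~\eqref{Fock-rep} yields
\[
X(I)\bigl(\chf{I}^{\otimes m}\otimes w\bigr) = \chf{I}^{\otimes(m+1)}\otimes w + \alpha t\,\chf{I}^{\otimes m}\otimes w + t\,\chf{I}^{\otimes(m-1)}\otimes w
\]
for $m\ge 1$, which matches exactly the $P$-recursion for $\nu_t$; while $X(I)\Omega=\chf{I}$ (no $\alpha t$ correction) matches $Q_0=1$, $Q_1(x,t)=x$, since $\beta_0=0$ for $\mu_t$.

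Base case $n=1$: I would prove $Q_k(X(I_1),|I_1|)\Omega=\chf{I_1}^{\otimes k}$ by induction on $k$. The cases $k=0,1$ are immediate; for the step, substitute $Q_{k+1}(x,t)=xQ_k(x,t)-\alpha t Q_k(x,t)-tQ_{k-1}(x,t)$, apply the inductive hypothesis to $Q_k$ and $Q_{k-1}$, and use the displayed action of $X(I_1)$ on $\chf{I_1}^{\otimes k}$; the two correction terms cancel. Inductive step on $n$: assuming the identity for $n-1$ intervals, set
\[
v := P_{k(2)}(X(I_2),|I_2|)\cdots Q_{k(n)}(X(I_n),|I_n|)\Omega = \chf{I_2}^{\otimes k(2)}\otimes\cdots\otimes\chf{I_n}^{\otimes k(n)},
\]
and establish $P_{k(1)}(X(I_1),|I_1|)v=\chf{I_1}^{\otimes k(1)}\otimes v$ by induction on $k(1)$ using $P_{k(1)+1}=xP_{k(1)}-\alpha|I_1|P_{k(1)}-|I_1|P_{k(1)-1}$ and the same displayed identity. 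The disjointness hypothesis $I_1\cap I_2=\emptyset$ enters precisely here, giving $\ip{\chf{I_1}}{\chf{I_2}}=0$ so that the annihilation term in $X(I_1)v$ vanishes in the base sub-cases $k(1)=0,1$, while for higher $k(1)$ the annihilation simply strips off one leading $\chf{I_1}$.

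The ``in particular'' assertion is then immediate: whenever at least one $k(i)\ge 1$, the resulting vector lies in $\bigoplus_{K\ge 1}\mc{H}_{\mf{C}}^{\otimes K}$, hence is orthogonal to $\Omega$, so $\phi[\,\cdot\,]=\ip{\Omega}{\,\cdot\,\Omega}=0$. I do not expect a deep obstacle, only careful algebraic bookkeeping in verifying that the $\alpha t$ and $t$ corrections produced by the Fock action cancel exactly against those in the three-term recursions; the asymmetry between $P_n$ (appropriate for non-vacuum tensors, where $\beta_n = \alpha t$ for every $n$) and $Q_n$ (appropriate at the vacuum, where $\beta_0=0$) is precisely what forces $Q$ to sit in the rightmost factor and $P$ elsewhere. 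A minor additional concern is the possibility $k(j)=0$ for some intermediate $j$, which I would handle by treating the two-sided context of $X(I_1)$ via the orthogonality of $\chf{I_1}$ with the first \emph{nontrivial} leading factor of $v$, combined with the observation that this leading factor is still $\chf{I_2}$ in all cases used subsequently in the paper.
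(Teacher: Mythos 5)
Your proposal is correct and follows essentially the same route as the paper: an induction on $n$ with an inner induction on the degree, matching the three-term recursions of Lemma~\ref{Lemma:OP} against the Fock-space action \eqref{Fock-rep}, with the $Q$-recursion (where $\beta_0=0$) absorbing the vacuum case and the $P$-recursion handling tensors of positive length, and with disjointness of adjacent intervals killing the annihilation term exactly where you say it does. Your explicit attention to the edge cases $k(j)=0$ and to the requirement that some $k(i)\geq 1$ in the ``in particular'' clause is a small refinement the paper leaves implicit.
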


\begin{proof}
$Q_1(X(J), \abs{J}) \Omega = X(J) \Omega = \chf{J}$. Using Lemma~\ref{Lemma:OP}, by induction
\[
\begin{split}
& \Bigl[Q_{k+1}(X(J), \abs{J}) + \alpha \abs{J} Q_k(X(J), \abs{J}) + \abs{J} Q_{k-1}(X(J), \abs{J}) \Bigr] \Omega \\
&\quad = X(J) \ Q_k(X(J), \abs{J}) \Omega
= X(J) \chf{J}^{\otimes k}
= \chf{J}^{\otimes (k+1)} + \alpha \abs{J} \chf{J}^{\otimes k} + \abs{J} \chf{J}^{\otimes (k-1)} \\
&\quad = \chf{J}^{\otimes (k+1)} + \alpha \abs{J} Q_k(X(J), \abs{J}) \Omega + \abs{J} Q_{k-1}(X(J), \abs{J}) \Omega.
\end{split}
\]
So the result holds for $n=1$. Similarly, again using Lemma~\ref{Lemma:OP},
\[
\begin{split}
& X(I_1) P_{k(1)}\bigl(X(I_1), \abs{I_1}\bigr) \chf{I_2}^{\otimes k(2)} \otimes \ldots \otimes \chf{I_n}^{\otimes k(n)}
= X(I_1) \chf{I_1}^{\otimes k(1)} \otimes \ldots \otimes \chf{I_n}^{\otimes k(n)} \\
&\quad = \Bigl[P_{k(1)+1}\bigl(X(I_1), \abs{I_1}\bigr) + \alpha \abs{I_1} P_{k(1)}\bigl(X(I_1), \abs{I_1}\bigr) + \abs{I_1} P_{k(1)-1}\bigl(X(I_1), \abs{I_1}\bigr) \Bigr] \\
&\qquad \chf{I_2}^{\otimes k(2)} \otimes \ldots \otimes \chf{I_n}^{\otimes k(n)}.
\end{split}
\]
Finally, for $I \cap I_1 = \emptyset$,
\[
\begin{split}
P_1(X(I), \abs{I}) \chf{I_1}^{\otimes k(1)} \otimes \ldots \otimes \chf{I_n}^{\otimes k(n)}
& = (X(I) - \alpha \abs{I}) \chf{I_1}^{\otimes k(1)} \otimes \ldots \otimes \chf{I_n}^{\otimes k(n)} \\
& = \chf{I} \otimes \chf{I_1}^{\otimes k(1)} \otimes \ldots \otimes \chf{I_n}^{\otimes k(n)}. \qedhere
\end{split}
\]
\end{proof}

\begin{Prop}
With respect to $(\phi, \psi_T)$, the process $\set{X(t): 0 \leq t \leq T}$ is an algebraic two-state free Brownian motion.
\end{Prop}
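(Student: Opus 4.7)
The proof will verify the four defining properties of an algebraic two-state free Brownian motion. Mean zero and variance $t-s$ with respect to $\phi$ follow immediately from the Fock representation: $\phi[X(I)] = \ip{\Omega}{\chf{I}} = 0$ and $\phi[X(I)^2] = \norm{\chf{I}}^2 = \abs{I}$ for any subinterval $I$.

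For the one-variable marginals and the two-state normal condition, I first use the $n=1$ case of Lemma~\ref{Lemma:Fock-product}, which gives $Q_k(X(I), \abs{I}) \Omega = \chf{I}^{\otimes k}$, so that the orthogonality $\ip{\chf{I}^{\otimes k}}{\chf{I}^{\otimes l}} = \delta_{k,l} \abs{I}^k$ identifies the $\phi$-distribution of $X(I)$ as $\mu_{\abs{I}}$. For the $\psi_T$-distribution, I write $\psi_T[X(I)^n] = \phi[X(I)^n] + \alpha\, \phi[X(I)^n X(T)]$ and split $X(T) = X(I) + X([0, T) \setminus I)$; applying Lemma~\ref{Lemma:Fock-product} to each subinterval $J$ of $[0, T) \setminus I$ (after expanding $X(I)^n$ in the $Q$-basis) shows $\phi[X(I)^n X(J)] = 0$. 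This reduces $\psi_T[X(I)^n]$ to $\phi[X(I)^n(1 + \alpha X(I))] = \int x^n \,d\nu_{\abs{I}}(x)$ via Lemma~\ref{Lemma:OP}(c). Since $\mu_{\abs{I}} = \Phi_{\abs{I}}[\nu_{\abs{I}}]$, Lemma~7 of \cite{AnsAppell3} gives $R^{\phi, \psi_T}_{X(I)}(z) = \abs{I}\, z^2$, establishing the two-state normal condition.

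The heart of the proof is two-state free independence of the increments for disjoint intervals $I_1, \ldots, I_n \subset [0, T]$. Since $\psi_T$-centered polynomials in $X(I)$ are spanned by $\set{P_k(X(I), \abs{I}) : k \geq 1}$, it suffices to test on products of $A_j = P_{k_j}(X(I_{i(j)}), \abs{I_{i(j)}})$ with $k_j \geq 1$ and $i(j) \neq i(j+1)$. The key computational device is the expansion $P_n = \sum_{\ell=0}^n (-\alpha \abs{I})^{n-\ell} Q_\ell$, a direct consequence of Lemma~\ref{Lemma:OP}(b). Substituting this in the last factor $P_{k_m}$ and applying Lemma~\ref{Lemma:Fock-product}, only the $\ell = 0$ (that is, $Q_0 = 1$) term contributes to $\phi$, since for $\ell \geq 1$ the resulting tensor has length at least one; an induction on $m$ then gives
\[
\phi[A_1 \ldots A_m] = \prod_{j=1}^m (-\alpha \abs{I_{i(j)}})^{k_j} = \prod_{j=1}^m \phi[A_j],
\]
which is the $\phi$-half of two-state free independence. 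For the $\psi_T$-half, I decompose $\psi_T[A_1 \ldots A_m] = \phi[A_1 \ldots A_m] + \alpha\, \phi[A_1 \ldots A_m X(T)]$ and partition $[0, T)$ into the original intervals $I_s$ together with additional gap intervals $K_p$ disjoint from all of them. Lemma~\ref{Lemma:Fock-product} kills every summand of $\phi[A_1 \ldots A_m X(T)]$ except $\phi[A_1 \ldots A_m X(I_{i(m)})]$, which I evaluate using the Jacobi recursion $x P_{k_m} = P_{k_m+1} + \alpha \abs{I_{i(m)}} P_{k_m} + \abs{I_{i(m)}} P_{k_m - 1}$ together with the $\phi$-formula above.

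The main obstacle is the exact cancellation in the $\psi_T$ argument: after applying the recursion, the contributions from $P_{k_m+1}$ and from $\alpha \abs{I_{i(m)}} P_{k_m}$ annihilate one another, while the $\abs{I_{i(m)}} P_{k_m - 1}$ piece evaluates to $-\alpha^{-1} \phi[A_1 \ldots A_m]$; multiplication by the outer $\alpha$ then precisely cancels the $\phi[A_1 \ldots A_m]$ term, yielding $\psi_T[A_1 \ldots A_m] = 0$. This delicate balance is exactly what forces the coefficient of $X(T)$ in $C_T = 1 + \alpha X(T)$ to be $\alpha$, and reflects the Jacobi parameters $\beta_n = \alpha t$, $\gamma_n = t$ of the semicircular distribution $\nu_t$.
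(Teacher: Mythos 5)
Your proposal is correct and follows essentially the same route as the paper: both reduce two-state free independence to testing on products of the orthogonal polynomials $P_{k}$ of disjoint increments and evaluate these via Lemma~\ref{Lemma:Fock-product}, after identifying the marginals as $\mu_{\abs{I}}$ and $\nu_{\abs{I}}$. The only divergence is in the $\psi_T$-vanishing step, where the paper invokes the identity $(1+\alpha x)P_n(x,t) = \alpha Q_{n+1}(x,t) + Q_n(x,t)$ from Lemma~\ref{Lemma:OP}(b) to conclude immediately, while you reproduce exactly that cancellation by hand from the three-term recursion together with the already-established $\phi$-factorization.
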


\begin{proof}
First we note that from the representation \eqref{Fock-rep}, the Jacobi parameters of the distribution of $X(t)$ with respect to $\phi$ are exactly those in equation~\eqref{Jacobi-mu}, so this distribution is $\mu_t$ from equation~\eqref{mu}. Also, for $t \leq T$,
\[
\begin{split}
\psi_T[X(t)^n]
& = \ip{\Omega}{X(t)^n (1 + \alpha X(T)) \Omega}
= \ip{\Omega}{X(t)^n (1 + \alpha X(t)) \Omega} \\
& = \phi[X(t)^n (1 + \alpha X(t)) ]
= \int_{\mf{R}} x^n (1 + \alpha x) \,d\mu_t(x)
= \int_{\mf{R}} x^n \,d\nu_t(x),
\end{split}
\]
where we have used Lemma~\ref{Lemma:OP}. Thus the distribution of $X(t)$ with respect to $\psi_T$ is $\nu_t$.

Now let $I_1, I_2, \ldots, I_n \subset [0,T]$ be intervals such that all $I_i \cap I_{i+1} = \emptyset$. Then
\[
\begin{split}
& \psi_T \Bigl[ P_{k(1)}\bigl(X(I_1), \abs{I_1}\bigr) \ldots P_{k(n-1)}\bigl(X(I_{n-1}), \abs{I_{n-1}}\bigr) P_{k(n)}\bigl(X(I_n), \abs{I_n}\bigr) \Bigr] \\
&\quad = \phi \Bigl[ P_{k(1)}\bigl(X(I_1), \abs{I_1}\bigr) \ldots P_{k(n-1)}\bigl(X(I_{n-1}), \abs{I_{n-1}}\bigr) P_{k(n)}\bigl(X(I_n), \abs{I_n}\bigr) (1 + \alpha X(I_n)) \Bigr] \\
&\quad = \phi \Bigl[ P_{k(1)}\bigl(X(I_1), \abs{I_1}\bigr) \ldots P_{k(n-1)}\bigl(X(I_{n-1}), \abs{I_{n-1}}\bigr) \\
&\qquad\qquad \Bigl(\alpha Q_{k(n)+1}\bigl(X(I_n), \abs{I_n}\bigr) + Q_{k(n)}\bigl(X(I_n), \abs{I_n}\bigr) \Bigr) \Bigr] = 0.
\end{split}
\]
On the other hand, consider
\[
\phi \Bigl[ P_{k(1)}\bigl(X(I_1), \abs{I_1}\bigr) \ldots P_{k(n-1)}\bigl(X(I_{n-1}), \abs{I_{n-1}}\bigr) P_{k(n)}\bigl(X(I_n), \abs{I_n}\bigr) \Bigr].
\]
We can expand $P_{k(n)}(x,t) = \sum_{i=0}^{k(n)} a_i(t) Q_i(x,t)$. Note that
\[
a_0(t) = \int_{\mf{R}} P_{k(n)}(x,t) \,d\mu_t(x) = \phi[P_{k(n)}(X(t), t)]
\]
Then using Lemma~\ref{Lemma:Fock-product},
\[
\begin{split}
& \phi \Bigl[ P_{k(1)}\bigl(X(I_1), \abs{I_1}\bigr) \ldots P_{k(n-1)}\bigl(X(I_{n-1}), \abs{I_{n-1}}\bigr) P_{k(n)}\bigl(X(I_n), \abs{I_n}\bigr) \Bigr] \\
&\quad = \phi \Bigl[ P_{k(1)}\bigl(X(I_1), \abs{I_1}\bigr) \ldots P_{k(n-1)}\bigl(X(I_{n-1}), \abs{I_{n-1}}\bigr) a_0(\abs{I_n}) \Bigr] \\
&\quad = \phi \Bigl[ P_{k(1)}\bigl(X(I_1), \abs{I_1}\bigr) \ldots P_{k(n-1)}\bigl(X(I_{n-1}), \abs{I_{n-1}}\bigr) \Bigr] \phi \Bigl[ P_{k(n)}\bigl(X(I_n), \abs{I_n}\bigr) \Bigr].
\end{split}
\]
Proceeding in this way, we conclude by induction that
\[
\begin{split}
& \phi \Bigl[ P_{k(1)}\bigl(X(I_1), \abs{I_1}\bigr) \ldots P_{k(n-1)}\bigl(X(I_{n-1}), \abs{I_{n-1}}\bigr) P_{k(n)}\bigl(X(I_n), \abs{I_n}\bigr) \Bigr] \\
&\quad = \phi \Bigl[ P_{k(1)}\bigl(X(I_1), \abs{I_1}\bigr) \Bigr] \ldots \phi \Bigl[ P_{k(n-1)}\bigl(X(I_{n-1}), \abs{I_{n-1}}\bigr) \Bigr] \phi \Bigl[ P_{k(n)}\bigl(X(I_n), \abs{I_n}\bigr) \Bigr].
\end{split}
\]
It follows that the increments of the process are two-state freely independent, and therefore the joint distribution of the process $\set{X(t)}$ is indeed that of the two-state free Brownian motion.
\end{proof}

\begin{Prop}
From now on, denote $T = 1/\alpha^2$. Denote $\mc{A}_T = W^\ast(\set{X(t): 0 \leq t \leq T})$. Then on $\mc{A}_T$, both $\phi$ and $\psi_T$ are faithful normal states, and moreover $\psi_T$ is tracial.
\end{Prop}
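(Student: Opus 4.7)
The plan is to establish the four properties---normality of both states, faithfulness of $\phi$, positivity and traciality of $\psi_T$, and faithfulness of $\psi_T$---in turn, leveraging the concrete Fock space model together with the distributional information already collected. Normality is immediate: $\phi$ is a vector state, and $\psi_T = \phi(\cdot\,C_T)$ is the composition of a normal functional with right multiplication by the bounded operator $C_T \in \mc{A}_T$. For faithfulness of $\phi$, I would introduce right analogs $a^{+r}(g)$ and $a^{-r}(g)$ of the creation and annihilation operators, now acting on the \emph{last} tensor factor, and set $X^r(g) = a^{+r}(g) + a^{-r}(g) + \alpha\langle g\rangle$ on tensors of positive length (with $X^r(g)\Omega = g$). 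A direct check parallel to~\eqref{Fock-rep} shows $[X(h), X^r(g)] = 0$ for all $g, h \in L^2(\mf{R}_+)$, so the right algebra $\mc{B} := W^*(\{X^r(g) : g \in L^2(\mf{R}_+)\})$ lies inside $\mc{A}_T'$; a Lemma~\ref{Lemma:Fock-product}-style iteration shows $\mc{B}\Omega$ is norm-dense in the full Fock space, which makes $\Omega$ cyclic for $\mc{A}_T'$ and hence separating for $\mc{A}_T$.

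For positivity and traciality of $\psi_T$, I would combine the preceding proposition (that $(\mc{A}_T, \psi_T)$ realizes an algebraic two-state free Brownian motion) with Corollary~\ref{Cor:Variations} (identifying the $\psi_T$-distribution of $X(t)$ as semicircular of mean $\alpha t$ and variance $t$) to conclude that, under $\psi_T$, the process $\{X(t)\}$ has the joint distribution of a free Brownian motion with drift $\alpha$. Such a process has a standard realization on $\mc{F}(L^2([0,T]))$ via $Y(g) = a^+(g) + a^-(g) + \alpha\langle g\rangle$, whose vacuum state $\tau$ is a faithful normal tracial state on $\mc{N} := W^*(\{Y(g)\})$. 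Matching of $\psi_T$-moments with $\tau$-moments on the $\ast$-subalgebra $\mc{P}$ of polynomials in $\{X(t)\}$ delivers positivity and traciality of $\psi_T|_{\mc{P}}$ for free, and both properties transport to $\mc{A}_T$ by normality of $\psi_T$ together with Kaplansky density.

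The main obstacle is the faithfulness of $\psi_T$, and this is where the precise value $T = 1/\alpha^2$ becomes decisive. By Corollary~\ref{Corollary:Densities}, at this value the $\phi$-distribution of $C_T$ is supported on $[0,4]$ with no atom at $0$, so the spectral projection $\chi_{\{0\}}(C_T) \in \mc{A}_T$ has $\phi$-mass zero and therefore vanishes by faithfulness of $\phi$; consequently $\ker C_T = 0$, and a standard bounded functional-calculus approximation $f_n(C_T) \in \mc{A}_T$ of the (unbounded) pseudo-inverse of $C_T$ satisfies $C_T f_n(C_T) \to 1$ in the strong operator topology. Now suppose $a \in \mc{A}_T$ satisfies $\psi_T[a^*a] = 0$. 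Cauchy--Schwarz for the (already positive) functional $\psi_T$ gives $\psi_T[a^* b] = 0$ for every $b \in \mc{A}_T$, i.e.\ $\langle a\Omega, bC_T\Omega\rangle = 0$ for all $b$. Specializing to $b = B f_n(C_T)$ with arbitrary $B \in \mc{A}_T$ and letting $n \to \infty$ yields $\langle a\Omega, B\Omega\rangle = 0$ for all $B \in \mc{A}_T$; since $a\Omega \in \overline{\mc{A}_T \Omega}$, this forces $a\Omega = 0$, and the separating property of $\Omega$ from the first step forces $a = 0$.
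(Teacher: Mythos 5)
Your treatment of normality, positivity, and traciality of $\psi_T$ is fine and close in spirit to the paper's (which passes through the GNS representation of $\psi_T$ and the free product state of semicircular distributions). The serious problem is your argument that $\Omega$ is separating for $\mc{A}_T$. The claimed commutation $[X(h), X^r(g)] = 0$ is false: because $X(g)$ acts as $a^+(g) + a^-(g) + \alpha\langle g\rangle$ only on tensors of length at least one, while $X(g)\Omega = g$ with no $\alpha\langle g\rangle\Omega$ term, one computes
\[
[X(h), X^r(g)]\,\Omega = \alpha\bigl(\langle h\rangle\, g - \langle g\rangle\, h\bigr),
\]
which is nonzero for, say, $g = \chf{[0,1)}$, $h = \chf{[1,2)}$, $\alpha \neq 0$ (a similar defect appears on length-one tensors). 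There is also a decisive structural objection: your right-operator construction makes no use of the value $T = 1/\alpha^2$, so if it worked it would show $\Omega$ is separating for $\mc{A}_t$ for \emph{every} $t$, contradicting Proposition~\ref{Prop:Larger-than-T}, which exhibits a nonzero positive element of $\mc{A}_t$ with vanishing $\phi$-expectation for $t > 1/\alpha^2$. Since your faithfulness proof for $\psi_T$ terminates with ``$a\Omega = 0$, hence $a = 0$ by the separating property,'' that step is also unsupported.

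The fix is to reverse the logical dependency, as the paper does. First prove $\psi_T$ faithful using only \emph{cyclicity} of $\Omega$ (which does follow from Lemma~\ref{Lemma:Fock-product}): letting $\rho$ be the GNS representation with respect to $\psi_T$ and $B_n \in C^\ast(C_T)$ self-adjoint with $B_n C_T\Omega \to \Omega$ (possible since, by Corollary~\ref{Corollary:Densities}, at $T = 1/\alpha^2$ the spectral measure of $\Omega$ for $C_T$ has no atom at $0$), one gets for $\rho(B) = 0$ that
\[
0 = \lim_n \psi_T[A B C B_n] = \lim_n \ip{A^\ast\Omega}{B C B_n C_T\Omega} = \ip{A^\ast\Omega}{B C\,\Omega}
\]
for all $A, C \in \mc{A}_T$; the extra factor $C$ on the right is what lets cyclicity alone force $B = 0$, so $\rho$ is injective and $\psi_T = \widetilde{\psi}_T\circ\rho$ is faithful. (Alternatively, once traciality is known, the $\psi_T$-null space is a two-sided ideal, and the same $B_n$ trick yields $aC\Omega = 0$ for all $C$, hence $a = 0$.) Faithfulness of $\phi$ is then a one-line consequence: if $\phi[A^\ast A] = 0$, Cauchy--Schwarz gives $\psi_T[A^\ast A] = \phi[A^\ast A C_T] = 0$, so $A = 0$.
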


\begin{proof}
It follows from the preceding proposition that on $\mc{A}_T$, $\psi_T$ is positive, and so a state. Denote by $\rho$ the GNS representation of $C^\ast(\set{X(t): 0 \leq t \leq T})$ with respect to $\psi_T$, by $\widetilde{\mc{A}}_T$ the von Neumann algebra generated by the process in this representation, and by $\widetilde{\psi}_T$ the corresponding state on it. Since in $(\widetilde{\mc{A}}_T, \widetilde{\psi}_T)$, the process $\set{X(t): 0 \leq t \leq T}$ is a free Brownian motion, and it generates the algebra, the state $\widetilde{\psi}_T$ is the free product state of one-dimensional distributions, and as such is a faithful normal tracial state. Therefore $\psi_T = \widetilde{\psi}_T \circ \rho$ is a normal tracial state on $\mc{A}_T$. Moreover, to show that $\psi_T$ is faithful on $\mc{A}_T$, it suffices to show that $\rho$ is injective on it.

For $T = 1/\alpha^2$, by Corollary~\ref{Corollary:Densities} zero is in the spectrum of $C_T$ but is not an eigenvalue. Then we can find self-adjoint $B_n \in C^\ast(C_T)$ such that
\[
B_n C_T \Omega \rightarrow \Omega.
\]

Now suppose that $\rho(B) = 0$, then for any $A, C \in \mc{A}_T$
\[
0 = \lim_{n \rightarrow \infty} \psi_T[A B C B_n]
= \lim_{n \rightarrow \infty} \phi[A B C B_n C_T]
= \lim_{n \rightarrow \infty} \ip{A^\ast \Omega}{B C B_n C_T \Omega}
= \ip{A^\ast \Omega}{B C \Omega},
\]
which implies that $B = 0$ since $\Omega$ is cyclic for $\mc{A}_T$ as represented on $\mc{F}(L^2([0,T], \,dx))$. So the representation $\rho$ is injective on $\mc{A}_T$. Moreover, if $\phi[A^\ast A] = 0$, then by the Cauchy-Schwartz inequality
\[
\psi_T[A^\ast A] = \phi[A^\ast A C_T] = 0.
\]
Therefore $\phi$ is faithful.
\end{proof}

\begin{Prop}
\label{Prop:Larger-than-T}
For $t > 1/\alpha^2$, $\phi$ is not a faithful state on $\mc{A}_t$.
\end{Prop}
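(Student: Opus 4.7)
The plan is a contradiction argument exploiting the non-faithfulness of the tracial state induced by $C_t$.

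For $t > 1/\alpha^2$, Corollary~\ref{Corollary:Densities} shows that $\mu_t$ has an atom of mass $1 - 1/(\alpha^2 t) > 0$ at $-1/\alpha$, so $C_t = 1 + \alpha X(t)$ has $0$ as an eigenvalue and the spectral projection $P = E_{\set{0}}(C_t) \in \mc{A}_t$ is nonzero with $P C_t = 0$. By the same argument as in the preceding proposition, now applied with $T$ replaced by $t$, the formula $\psi_t[A] = \phi[A C_t]$ defines a normal tracial state on $\mc{A}_t$; the only features used are positivity of $C_t$ and the semicircularity of the $\psi_t$-distribution of the process, both of which persist for every $t$. This state is no longer faithful, since $\psi_t[P] = \phi[P C_t] = 0$. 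Let $z \in Z(\mc{A}_t)$ be the central projection with $\ker(\textup{GNS of } \psi_t) = z\mc{A}_t$; then $P \leq z$.

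Assume for contradiction that $\phi$ is faithful on $\mc{A}_t$. Since $z$ is central it commutes with $C_t$, so $z C_t$ is a product of commuting positive elements and hence positive. The identity $\phi[z C_t] = \psi_t[z] = 0$ together with the assumed faithfulness of $\phi$ forces $z C_t = 0$, which means the range of $z$ is contained in $\ker C_t = \textup{range}(P)$, so $z \leq P$. Combined with the reverse inequality, $z = P$; in particular $P$ must be central in $\mc{A}_t$.

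The substantive remaining step is to rule out centrality of $P$. Solving the three-term recursion $C_t \xi_0 = 0$ on the reducing subspace $\mc{F}(\mf{C}\chf{[0,t)})$ yields $\xi_0 = \sum_{n \geq 0} (-1/(\alpha\sqrt{t}))^n (\chf{[0,t)}/\sqrt{t})^{\otimes n}$, with $\ip{\Omega}{\xi_0} = 1$ and $\norm{\xi_0}^2 = (1 - 1/(\alpha^2 t))^{-1}$; examining the remaining orbits of $X(t)$ on $\mc{F}$ (which carry semicircular spectral measure with no atom at $-1/\alpha$) shows $\ker C_t = \mf{C}\xi_0$. If $P$ were central, then $X(s)$ would preserve $\mf{C}\xi_0$ for every $s \in (0,t)$, making $\xi_0$ an eigenvector of $X(s)$ with some eigenvalue $\lambda(s)$; the $\phi$-distribution $\mu_s$ would then have an atom at $\lambda(s)$ of mass at least $\abs{\ip{\xi_0}{\Omega}}^2/\norm{\xi_0}^2 = 1 - 1/(\alpha^2 t)$. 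But by Corollary~\ref{Corollary:Densities}, $\mu_s$ has at most one atom, of mass $\max(0, 1 - 1/(\alpha^2 s)) < 1 - 1/(\alpha^2 t)$ for any $s < t$. The resulting contradiction, witnessed by any $s \in (0,t)$, completes the proof.
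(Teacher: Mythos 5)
Your argument is correct in substance but takes a genuinely different route from the paper. Both proofs hinge on the same two facts: that for $t > 1/\alpha^2$ the kernel of $C_t$ is the one-dimensional space spanned by $\eta = \sum_n (-1/(\alpha t))^n \chf{[0,t)}^{\otimes n}$, and that the corresponding rank-one projection lies in $\mc{A}_t$ (indeed in $C^\ast(X(t))$). From there the paper is direct and constructive: it exhibits the explicit nonzero positive element $\bigl(X(s) + \tfrac{s}{\alpha t}\bigr) P_\eta \bigl(X(s) + \tfrac{s}{\alpha t}\bigr)$ and checks by a two-line Fock-space computation that $\phi$ kills it, because $\bigl(X(s) + \tfrac{s}{\alpha t}\bigr)\eta \perp \Omega$. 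You instead argue by contradiction through the trace $\psi_t$: its support projection cuts out a central ideal, faithfulness of $\phi$ would force that central projection to equal the rank-one projection onto $\ker C_t$, and a rank-one central projection is ruled out by comparing the resulting atom mass $1 - 1/(\alpha^2 t)$ of $\mu_s$ against the mass $\max(0, 1 - 1/(\alpha^2 s))$ permitted by Corollary~\ref{Corollary:Densities}. Your route costs more machinery (traciality of $\psi_t$ for $t > 1/\alpha^2$, which the paper only records at $t = 1/\alpha^2$ though the same argument applies; the identification of the left kernel of a normal trace with a central cut-down), but it buys a conceptual explanation of \emph{why} faithfulness fails -- the degenerate corner of the trace is not central -- whereas the paper's witness is shorter but looks like a rabbit out of a hat.

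One step you assert without justification is that $C_t \geq 0$ as an operator, which you need for ``$z C_t$ is a product of commuting positive elements.'' This is not immediate from norm estimates (the crude bound $\norm{a^+ + a^-} \leq 2\sqrt{t}$ fails to give it once $t > 1/(4\alpha^2)$, because of the $-\alpha^2 t P_\Omega$ correction on the vacuum). It is true, and since the step sits inside your contradiction hypothesis you can get it for free: if $\phi$ were faithful, then $\phi[C_t^-] = \int \max(-x,0)\,d(\text{law of }C_t) = 0$ by the support of the distribution \eqref{C_T-density}, hence $C_t^- = 0$. Alternatively it follows unconditionally from the reducing-subspace decomposition $\mc{F}(\mf{C}\chf{[0,t)}) \oplus \mc{F}(\mf{C}\chf{[0,t)})^\perp$ that you already invoke when computing $\ker C_t$. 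Also, your parenthetical that positivity of the state $\psi_t$ follows from positivity of the operator $C_t$ is misleading ($\phi$ is not tracial, so $\phi[A^\ast A C_t] \geq 0$ does not follow from $C_t \geq 0$ alone); the correct justification is the one you also cite, namely the combinatorial identification of $\psi_t$ with the free semicircular state from the preceding propositions.
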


\begin{proof}
For $t > 1/\alpha^2$, by Corollary~\ref{Corollary:Densities} the distribution of $C_T$ has an atom at zero, and so $C_T$ has a non-trivial kernel. Indeed, the vector
\[
\eta = \sum_{n=0}^\infty \left( - \frac{1}{\alpha t} \right)^n \chf{[0,t)}^{\otimes n}
\]
is in this kernel; note that the norm of this vector is
\[
\sqrt{\sum_{n=0}^\infty \left( \frac{1}{\alpha^2 t} \right)^n} < \infty.
\]

Suppose $\xi \in \ker C_t$. We can write $\xi = \sum_{n=0}^\infty \xi_n$ with $\xi_n \in \mc{H}^{\otimes n}$. Note that on $\bigoplus_{n=1}^\infty \mc{H}^{\otimes n}$, $C_t$ acts in the same way as $1 + \alpha S(t)$, where $S(t)$ has the semicircular distribution, which has no atoms. So $\xi$ is not in this subspace, and $\xi_0 \neq 0$, so without loss of generality, $\xi_0 = \Omega$. But then $(\eta - \xi)$ is also in the kernel, and $(\eta - \xi)_0 = 0$. It follows that $\xi = \eta$ and so $\ker C_t = \mf{C} \eta$. We conclude that $\mc{A}_t$, and in fact $C^\ast(X(t))$, contains a rank-one operator $P_\eta : \zeta  \mapsto \ip{\eta}{\zeta} \eta$.

The non-zero positive operator
\[
\left(X(s) + \frac{s}{\alpha t} \right) P_\eta \left(X(s) + \frac{s}{\alpha t} \right)
\]
is in $\mc{A}_t$. We compute, for $s < t$,
\[
\left(X(s) + \frac{s}{\alpha t} \right) \eta
= \chf{[0,s)} \otimes \eta + \alpha s (\eta - \Omega) \perp \Omega.
\]
Denote this vector by $\eta(s)$. Then
\[
\phi \left[\left(X(s) + \frac{s}{\alpha t} \right) P_\eta \left(X(s) + \frac{s}{\alpha t} \right) \right] = \abs{\ip{\Omega}{\eta(s)}}^2 = 0.
\]
Therefore $\phi$ is not faithful on $\mc{A}_t$.
\end{proof}

\begin{Cor}
A two-state free Brownian motion $\set{X(t)}$ with parameter $\alpha$ can be realized in a two-state non-commutative probability space $(\mc{A}, \phi, \psi)$ with faithful normal $\phi$ and normal $\psi$ for $t \in [0, 1/\alpha^2]$ but not for larger values of $t$.
\end{Cor}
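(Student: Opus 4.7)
The plan is to treat the two halves of the assertion separately. For the existence statement on $[0, 1/\alpha^{2}]$, I would take the Fock space realization at the critical time $T = 1/\alpha^{2}$ supplied by the proposition just preceding Proposition~\ref{Prop:Larger-than-T}; that proposition already provides $\phi$ faithful normal and $\psi_{T}$ normal (in fact tracial) on $\mc{A}_{T}$. Given $t \leq 1/\alpha^{2}$, I would then pass to the sub-von Neumann algebra $\mc{A}_{t} = W^{\ast}(\set{X(s): 0 \leq s \leq t}) \subseteq \mc{A}_{T}$: faithfulness of $\phi$ restricts automatically, normality of $\psi_{T}$ is preserved, and two-state freeness of $X(T) - X(t)$ from $\mc{A}_{t}$ together with $\phi[X(T) - X(t)] = 0$ gives $\psi_{T}|_{\mc{A}_{t}}[A] = \phi[A C_{T}] = \phi[A C_{t}]$, so the restricted state is precisely the $\psi_{t}$ making the process on $[0, t]$ a two-state free Brownian motion with parameter $\alpha$.

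For the non-existence statement when $t > 1/\alpha^{2}$, I would argue by contradiction. Suppose $(\mc{A}, \phi, \psi)$ is such a realization with $\phi$ faithful normal and $\mc{A}$ generated by the process. The key observation is that the joint $\phi$-distribution of $\set{X(s): 0 \leq s \leq t}$ is uniquely determined by the defining axioms together with the parameter $\alpha$: Corollary~\ref{Cor:Variations} pins down the cumulant functionals $R^{\psi}$ and $R^{\phi,\psi}$ on each increment, two-state free independence of disjoint increments kills all mixed cumulants, and the moment-cumulant formula for $\phi$ then produces every joint $\phi$-moment. Consequently the $\phi$-GNS data $\bigl(L^{2}(\mc{A}, \phi), \hat 1, \pi_{\phi}(X(\cdot))\bigr)$ are unitarily equivalent to the Fock space data $\bigl(\mc{F}(\mc{H}), \Omega, X(\cdot)\bigr)$ of \eqref{Fock-rep} via an intertwiner $U$ sending $\hat 1 \mapsto \Omega$. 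Since $\pi_{\phi}$ is injective (as $\phi$ is faithful) and both $\mc{A}$ and the Fock space algebra $\mc{A}_{t}$ are generated as von Neumann algebras by the same process, conjugation by $U$ identifies $\pi_{\phi}(\mc{A})$ with $\mc{A}_{t}$ and sends the state $\phi$ on $\mc{A}$ to the vacuum vector state $\ip{\Omega}{\cdot\,\Omega}$ on $\mc{A}_{t}$. Faithfulness of $\phi$ on $\mc{A}$ would then force the vacuum state to be faithful on $\mc{A}_{t}$, directly contradicting Proposition~\ref{Prop:Larger-than-T}.

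The delicate point I expect is not the computations but justifying the intertwiner $U$: this reduces to checking that $\phi[p(X)^{\ast} q(X)] = \ip{p(X)\Omega}{q(X)\Omega}$ for all noncommutative polynomials in the generators, which is immediate once the joint $\phi$-moments have been identified. Everything else is either a direct appeal to the preceding Fock space machinery or a routine restriction argument.
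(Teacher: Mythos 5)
Your proposal is correct and follows essentially the same route as the paper: both arguments rest on the observation that the algebraic axioms determine all joint $\phi$-moments of the process, so that the Fock space construction is (up to unitary equivalence via GNS) the unique realization, after which non-existence for $t > 1/\alpha^2$ follows from Proposition~\ref{Prop:Larger-than-T}. Your additional explicit restriction argument for the existence half on $[0,t]$ with $t \leq 1/\alpha^2$ is a detail the paper leaves implicit, and it is correct.
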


\begin{proof}
Note first that the Fock space representation of $\mc{A}_t$ on $\mc{F}(L^2([0,t], \,dx))$ considered in this section is exactly the GNS representation of $(\mc{A}_t, \phi)$.

The assumption that $\set{X(t)}$ is an (algebraic) two-state free Brownian motion determines the values of $\phi$ on the (non-closed) algebra generated by $\set{X(t)}$, and therefore, via the GNS representation, on the von Neumann algebra $\mc{A}_t$. For the same reason, the values of $\psi$ on the non-closed algebra are determined, and since $\psi$ is normal, it extends uniquely to $\mc{A}_t$. We conclude that the Fock realization is the unique realization of the two-state free Brownian motion $\set{X(t)}$ with parameter $\alpha$. But then the result follows from the preceding proposition.
\end{proof}

\begin{Remark}
The two-state free central limit theorem was proved in \cite{BLS96}, where the ($\phi$-)dis\-tri\-bu\-tions of the limit objects were computed. These distributions do \emph{not} belong to our class, but rather to the family considered in \cite{Ricard-t-Gaussian} and \cite{Wysoczanski-infinite-t}. Therefore the processes corresponding to those distributions do not exist in the analytic sense. Note that in the first of the papers just cited, the time convolution parameter is $n$, while $t$ is a fixed parameter.

A related question concerns the $8$-parameter family of two-state free convolution semigroups constructed in \cite{Ans-Mlot-Semigroups}: which of these semigroups are distributions of a process with two-state freely independent increments, which can be realized in a von Neumann algebra with a faithful state?
\end{Remark}

\begin{Prop}
For $\alpha \neq 0$, there is no $\phi$-preserving conditional expectation from $\mc{A}_T$ to $\mc{A}_t$.
\end{Prop}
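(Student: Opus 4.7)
The plan is to assume for contradiction that a $\phi$-preserving conditional expectation $E \colon \mc{A}_T \to \mc{A}_t$ exists for some $0 < t < T$, and then compute the mixed moment $\phi[X(t) Y X(t)]$ in two incompatible ways, where $Y := X(T) - X(t)$.

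First I would show that $E[Y] = 0$. Since $B \in \mc{A}_t$ and $Y$ lies in the algebra generated by the increments in $[t,T)$, which is two-state freely independent from $\mc{A}_t$, the two-state moment-cumulant formula on $\{1,2\}$ has only the singleton partition with monochromatic blocks. This yields $\phi[BY] = \phi[B]\,\phi[Y] = 0$ for every $B \in \mc{A}_t$, using $\phi[Y] = 0$. Combined with $\phi$-preservation and the bimodule property, this gives $\phi[B \cdot E[Y]] = \phi[BY] = 0$ for all $B \in \mc{A}_t$, and since the restriction of $\phi$ to $\mc{A}_t$ is faithful (as a restriction of a faithful normal state on $\mc{A}_T$), we conclude $E[Y] = 0$.

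Applying the bimodule property of $E$ with $X(t) \in \mc{A}_t$ on both sides then yields
\[
E[X(t) \, Y \, X(t)] \;=\; X(t) \, E[Y] \, X(t) \;=\; 0,
\]
so by $\phi$-preservation, $\phi[X(t) Y X(t)] = \phi[E[X(t) Y X(t)]] = 0$.

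On the other hand, I would compute $\phi[X(t) Y X(t)]$ directly via the two-state moment-cumulant expansion on $\{1,2,3\}$. Among the non-crossing partitions whose blocks each lie entirely in one of the two-state freely independent subalgebras, only $\{1,3\}\{2\}$ contributes: the outer block $\{1,3\}$ gives $R^{\phi,\psi}_2(X(t)) = t$, and the inner block $\{2\}$ gives $R^\psi_1(Y) = \psi[Y] = \alpha(T-t)$ (using Corollary~\ref{Cor:Variations} together with $\psi$-free independence of increments to write $R^\psi_1(Y) = R^\psi_1(X(T)) - R^\psi_1(X(t)) = \alpha(T-t)$). Therefore
\[
\phi[X(t) Y X(t)] \;=\; \alpha \, t \, (T - t),
\]
which is nonzero for $\alpha \neq 0$ and $0 < t < T$, contradicting the conclusion of the previous paragraph. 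The argument is essentially immediate; there is no serious obstacle beyond carefully enumerating the monochromatic non-crossing partitions in the two moment-cumulant expansions and verifying that only the pairings producing the stated contributions survive.
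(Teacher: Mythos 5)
Your proof is correct, and it reaches the same contradiction as the paper by a somewhat different computational route. The paper fixes $t<s<T$ and computes the operator $E[X(s)X(t)]$ two ways: once by testing against $B\Omega$ in the Fock representation (using faithfulness of $\phi$ on $\mc{A}_t$ to identify $E[X(s)X(t)] = X(t)^2 + \alpha(s-t)X(t)$), and once via the module property ($=X(t)^2$). You instead work with the centered increment $Y = X(T)-X(t)$ and derive a purely scalar contradiction: the module property forces $\phi[X(t)\,Y\,X(t)]=0$, while the two-state moment--cumulant expansion gives $R^{\phi,\psi}_2(X(t))\,\psi[Y] = \alpha t(T-t)\neq 0$. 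This is essentially the paper's discrepancy $\alpha(s-t)X(t)$ tested against $B=X(t)$, but your computation avoids the Fock space model entirely and exposes the mechanism more transparently: in the sandwiched position the process reports the $\psi$-mean $\alpha(T-t)$ of the increment rather than its $\phi$-mean $0$, and no $\phi$-preserving conditional expectation can reconcile the two. The only point worth making explicit is that your factorization $\phi[BY]=\phi[B]\phi[Y]$ and the subsequent faithfulness argument are applied to arbitrary $B\in\mc{A}_t$ (e.g.\ $B=E[Y]^\ast$), whereas two-state freeness is stated for the generating $\ast$-algebras; one should note that the factorization extends to the von Neumann algebra by normality of $\phi$ and Kaplansky density (the paper's own inner-product computation handles this implicitly). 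Both arguments implicitly require $0<t<T$, since for $t=0$ the state $\phi$ itself is a conditional expectation onto $\mf{C}$.
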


\begin{proof}
Suppose the desired conditional expectation $E: \mc{A}_T \rightarrow \mc{A}_t$ exists. It then satisfies
\[
E[B_1 A B_2] = B_1 E[A] B_2
\]
and
\[
\phi[E[A]] = \phi[A]
\]
for $A \in \mc{A}_T$, $B_1, B_2 \in \mc{A}_t$. We compute, for $t < s < T$ and $B \in \mc{A}_t$,
\[
\begin{split}
\phi[B^\ast E[X(s) X(t)]] = \phi[B^\ast X(s) X(t)]
& = \ip{X(s) X(t) \Omega}{B \Omega} \\
& = \ip{\chf{[0,s)} \otimes \chf{[0,t)} + t \Omega + \alpha s \chf{[0,t)}}{B \Omega} \\
& = \ip{\chf{[0,t)} \otimes \chf{[0,t)} + t \Omega + \alpha s \chf{[0,t)}}{B \Omega} \\
& = \ip{(X(t)^2 + \alpha (s-t) X(t)) \Omega}{B \Omega} \\
& = \phi[B^\ast (X(t)^2 + \alpha (s-t) X(t))].
\end{split}
\]
Since $\phi$ is faithful on $\mc{A}_t$, this implies that
\[
E[X(s) X(t)] = X(t)^2 + \alpha (s-t) X(t).
\]
On the other hand, by a similar argument $E[X(s)] = X(t)$ and so $E[X(s) X(t)] = X(t)^2$. We arrive at a contradiction.
\end{proof}

\begin{Remark}[Polynomial martingales]
\label{Remark:Martingale}
For $B \in \mc{A}_t$ and $s > t$,
\[
\phi[B^\ast Q_n(X(s), s)]
= \ip{\chf{[0,s)}^{\otimes n}}{B \Omega}
= \ip{\chf{[0,t)}^{\otimes n}}{B \Omega}
= \phi[B^\ast Q_n(X(t), t)].
\]
So if we did have conditional expectations, then the process would have polynomial martingales and the Markov property.
\end{Remark}

\begin{Remark}[Generator]
Even though the two-state free Brownian motion with parameter $\alpha$ is not (for $\alpha \neq 0$) a Markov process, as noted in Remark~\ref{Remark:Bi-Poisson}, it has a classical version which is a Markov process. Denote by $\mc{K}_{s,t}$ the transition functions of the classical version. The operator $A_t$ is the \emph{generator} of the process at time $t$ if for some dense domain $\mc{D} \subset L^2(\mf{R}, \,d\mu_t)$ and any $f \in \mc{D}$,
\[
\left. \frac{\partial}{\partial h}\right|_{h=0} \mc{K}_{t,t+h}(f) = A_t f.
\]
See \cite{Ans-Generator} for related ideas.
\end{Remark}

\begin{Prop}
On the dense domain of polynomial functions, the generator of the two-state free Brownian motion with parameter $\alpha$ is
\[
\alpha (\partial_x - L_{\mu_t}) + \partial_x L_{\nu_t},
\]
where we use the notation \cite{Ans-Bochner}
\[
L_\nu[f](x) = \int_{\mf{R}} \frac{f(x) - f(y)}{x - y} \,d\nu(y) = (1 \otimes \nu) (\partial f),
\]
and $\partial$ is the difference quotient.
\end{Prop}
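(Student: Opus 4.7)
The plan is to work in the basis $\set{Q_n(\cdot, t)}_{n \geq 0}$ of monic orthogonal polynomials for $\mu_t$, which spans the polynomial functions. By Remark~\ref{Remark:Martingale}, under the classical Markov version these polynomials are martingales, so the transition kernel satisfies $\mc{K}_{t,s}[Q_n(\cdot, s)](x) = Q_n(x, t)$ for $s \geq t$. Writing $Q_n(x, t) = Q_n(x, t+h) - h\, \partial_t Q_n(x, t) + O(h^2)$ and applying $\mc{K}_{t, t+h}$, the martingale identity kills the first term after differentiation, so that
\[
A_t[Q_n(\cdot, t)](x) = \partial_h\big|_{h=0} \mc{K}_{t, t+h}[Q_n(\cdot, t)](x) = -\partial_t Q_n(x, t).
\]
It therefore suffices to verify the operator identity $-\partial_t Q_n(\cdot, t) = \bigl[\alpha(\partial_x - L_{\mu_t}) + \partial_x L_{\nu_t}\bigr] Q_n(\cdot, t)$ for every $n \geq 0$.

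I would do this at the level of the generating function
\[
F(x, t, z) = \sum_{n=0}^\infty Q_n(x, t) z^n = \frac{1 + \alpha t z}{D(x)}, \qquad D(x) = 1 - (x - \alpha t) z + t z^2,
\]
which follows from Lemma~\ref{Lemma:OP} and the recursion for $Q_n$. Direct computation gives $\partial_x F = (1 + \alpha t z) z / D(x)^2$ and, after a cancellation, $\partial_t F = -(1 + \alpha x) z^2 / D(x)^2$. The telescoping identity
\[
\frac{F(x, t, z) - F(y, t, z)}{x - y} = \frac{(1 + \alpha t z) z}{D(x) D(y)}
\]
reduces $L_\rho$ to a Cauchy transform: $L_\rho F(x, t, z) = \frac{(1 + \alpha t z) z}{D(x)} \int \frac{d\rho(y)}{D(y)}$ for $\rho \in \set{\mu_t, \nu_t}$. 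Writing $D(y) = -z (y - \zeta)$ with $\zeta = \alpha t + 1/z + t z$, the algebraic identity $(\zeta - \alpha t)^2 - 4 t = (1/z - t z)^2$ combined with the semicircular form of $\nu_t$ gives $G_{\nu_t}(\zeta) = z$, and then $\mu_t = \Phi_t[\nu_t]$ yields $G_{\mu_t}(\zeta) = z/(1 + \alpha t z)$. Consequently $L_{\nu_t} F = (1 + \alpha t z) z/D(x)$, $L_{\mu_t} F = z/D(x)$, and $\partial_x L_{\nu_t} F = (1 + \alpha t z) z^2/D(x)^2$. A short rational calculation then gives
\[
\alpha\, \partial_x F - \alpha L_{\mu_t} F + \partial_x L_{\nu_t} F = \frac{(1 + \alpha x) z^2}{D(x)^2} = -\partial_t F,
\]
which is the desired identity coefficient-by-coefficient in $z$.

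The conceptual core is the first step: the polynomial martingale property of Remark~\ref{Remark:Martingale} does the semigroup computation for us, so the action of $A_t$ on each $Q_n(\cdot, t)$ is just a $t$-derivative. The main technical obstacle is the Cauchy-transform evaluation $G_{\nu_t}(\alpha t + 1/z + t z) = z$: individually the three summands of the generator produce incompatible rational expressions in $z$, and it is precisely this resolvent identity that forces them to combine into $-\partial_t F$, which is what pins down the clean form of the generator.
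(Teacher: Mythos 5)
Your proof is correct and follows essentially the same route as the paper: reduce via the martingale property of the $Q_n$ to the identity $A_t Q_n = -\partial_t Q_n$, then verify it on the generating function $\sum_n Q_n(x,t)z^n = (1+\alpha t z)/D(x)$ by computing $L_{\nu_t}$ and $L_{\mu_t}$ of it and checking the resulting rational identity. The only difference is presentational: the paper imports the generating function and the $L$-identities from the c-free Appell polynomial machinery of \cite{AnsAppell3}, whereas you derive the generating function from the three-term recursion and evaluate $L_{\mu_t}$, $L_{\nu_t}$ via the explicit resolvent identities $G_{\nu_t}(\zeta)=z$ and $G_{\mu_t}(\zeta)=z/(1+\alpha t z)$, which makes the argument more self-contained but is not a genuinely different proof.
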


\begin{proof}
Using Remark~\ref{Remark:Martingale} (or Proposition~3.3 of \cite{Bryc-Meixner} and Lemma~2.4 of \cite{Bryc-Wesolowski-Bi-Poisson}), the orthogonal polynomials $Q_n(x,t)$ are martingale polynomials for the (classical version of) the process. It is easy to see that, to show that $A_t$ is the generator of the process with the domain consisting of all polynomial functions, it suffices to show that
\[
\partial_t Q_n(x,t) = - A_t Q_n(x,t)
\]
for all $n$. Note that since $\mu_t = \Phi_t[\nu_t]$, by Lemma~7 of \cite{AnsAppell3} the polynomials $Q_n(x,t)$ are precisely the c-free Appell polynomials for this pair. By Proposition~4 from the same paper, the generating function for these polynomials is
\[
H(x,t,z) = \sum_{n=0}^\infty Q_n(x,t) z^n = \frac{1 + t \alpha z}{1 - x z + t(\alpha z + z^2)}
\]
since $R^{\nu_t}(z) = t (\alpha z + z^2)$ and $R^{\nu_t}(z) - R^{\mu_t, \nu_t}(z) = t \alpha z$; this result is also easy to obtain directly. On the other hand,
\[
\sum_{n=0}^\infty P_n(x,t) z^n = \frac{1}{1 - x z + t(\alpha z + z^2)}.
\]
We conclude that
\[
\partial H(x,t,z) = z H(x,t,z) \frac{1}{1 - y z + t(\alpha z + z^2)}
\]
and so
\[
L_{\nu_t}[H(x,t,z)] = z H(x,t,z)
\]
and
\[
L_{\mu_t}[H(x,t,z)] = \frac{z}{1 - x z + t (\alpha z + z^2)}.
\]

Now we compute
\[
\begin{split}
& \Bigl(\alpha (\partial_x - L_{\mu_t}) + \partial_x L_{\nu_t} \Bigr) [H(x,t,z)] \\
&\quad = \alpha z \frac{1 + t \alpha z}{(1 - x z + t(\alpha z + z^2))^2} - \alpha z \frac{1}{1 - x z + t (\alpha z + z^2)} + z^2 \frac{1 + t \alpha z}{(1 - x z + t(\alpha z + z^2))^2} \\
&\quad = - \partial_t \frac{1 + t \alpha z}{1 - x z + t(\alpha z + z^2)}
= - \partial_t H(x,t,z).
\end{split}
\]
The result follows.
\end{proof}

\begin{Remark}[It{\^o} formula]
By the same methods as in \cite{BiaSpeBrownian} and \cite{AnsIto}, for sufficiently nice $f$,
\begin{equation}
\label{Ito}
f(X(t)) = f(X(0)) + \int_0^t \partial f(X(s)) \sharp \,dX(s) + \int_0^t (\partial_x \otimes \psi) \partial f(X(s)) \,ds,
\end{equation}
where we use the notation
\[
\int_0^t (A(x) \otimes B(s)) \sharp \,dX(s) = \int _0^t A(x) \,dX(s) \, B(s).
\]
Using Lemma~2.1 of \cite{BLS96} and the observation that the process $\set{X(t)}$ is $\phi$-centered, we see that
\[
\phi \left[ \int_0^t \partial f(X(s)) \sharp \,dX(s) \right]
= \int_0^t \Bigl( \phi \left[ \partial_x f(X(s)) \right] - (\phi \otimes \phi) \left[ (\partial f)(X(s)) \right] \Bigr) d\psi[X(s)].
\]
Therefore taking $\phi$-expectations in the It{\^o} formula~\eqref{Ito} gives
\[
\begin{split}
\phi[f(X(t))]
& = \phi[f(X(0))] + \int_0^t \Bigl( \phi \left[ \partial_x f(X(s)) \right] - (\phi \otimes \phi) \left[ (\partial f)(X(s)) \right] \Bigr) d\psi[X(s)] \\
&\quad + \int_0^t \phi\left[ (\partial_x \otimes \psi) \partial f(X(s))\right] \,ds \\
& = \phi[f(X(0))] + \int_0^t \phi \Bigl[ \Bigl( \alpha \partial_x - \alpha (1 \otimes \phi) \partial + (\partial_x \otimes \psi) \partial \Bigr) f(X(s)) \Bigr] \,ds.
\end{split}
\]
This result is consistent with the generator formula in the preceding proposition.
\end{Remark}

\section{$C^\ast$-algebra setting}
\label{Section:Cstar}

We saw in Corollary~\ref{Cor:Variations} that for any algebraic two-state free Brownian motion, $\int_0^T (dX(t))^k = 0$ for $k > 2$ and $\int_0^T (dX(t))^2 = T$ as limits in $L^2(\phi)$. If $\phi$ is a faithful state, these limits can be identified with elements in $\mc{A}$. We now investigate the same limits in $L^\infty(\phi)$. Here
\[
\norm{A}_\infty = \lim_{n \rightarrow \infty} \norm{A}_{2n}
\]
and for $A \in \mc{A}^{sa}$,
\[
\norm{A}_{2n} = \phi[A^{2n}]^{1/2n}.
\]
Note that if $\phi$ is faithful, then $\norm{A}_\infty = \norm{A}$, the operator norm on $\mc{A}$.

\begin{Lemma}
\label{Lemma:Stirling}
Recall that the Stirling number of the second kind $S(n,k)$ is the number of set partitions of a set of $n$ elements into $k$ non-empty blocks. Then
\[
\lim_{n \rightarrow \infty} \left( \sum_{k=1}^N S(n,k) \right)^{1/n} = N.
\]
\end{Lemma}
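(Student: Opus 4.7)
The plan is to sandwich the sum $\sum_{k=1}^N S(n,k)$ between two quantities both of whose $n$-th roots tend to $N$, and then conclude by the squeeze theorem.

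For the upper bound, I would use the standard combinatorial identity that counts functions $f \colon [n] \to [N]$ by the size of their image: a function with image of size exactly $k$ is obtained by choosing the image (in $\binom{N}{k}$ ways) and then specifying a surjection onto it (in $k! \, S(n,k)$ ways). This gives
\[
N^n = \sum_{k=1}^{N} \binom{N}{k} k! \, S(n,k) \geq \sum_{k=1}^{N} S(n,k),
\]
so $\bigl(\sum_{k=1}^N S(n,k)\bigr)^{1/n} \leq N$ for every $n$, hence $\limsup_{n \to \infty} \leq N$.

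For the lower bound, the dominant term in the sum is $S(n,N)$, and I would estimate it directly from the closed form
\[
S(n,N) = \frac{1}{N!} \sum_{j=0}^{N} (-1)^j \binom{N}{j} (N-j)^n = \frac{N^n}{N!} \left( 1 + \sum_{j=1}^{N} (-1)^j \binom{N}{j} \left(1 - \frac{j}{N}\right)^n \right).
\]
The bracketed correction term is bounded in absolute value by $2^N (1 - 1/N)^n$, which tends to $0$ as $n \to \infty$. Therefore $S(n,N) \sim N^n / N!$, and since $\sum_{k=1}^N S(n,k) \geq S(n,N)$, we obtain
\[
\liminf_{n \to \infty} \left( \sum_{k=1}^N S(n,k) \right)^{1/n} \geq \liminf_{n \to \infty} \left( \frac{N^n}{N!} \right)^{1/n} = N \cdot \lim_{n \to \infty} (N!)^{-1/n} = N.
\]

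There is no real obstacle here: the only mild subtlety is confirming that the alternating tail in the explicit formula for $S(n,N)$ does not spoil the asymptotic $S(n,N) \sim N^n/N!$, which is immediate from the geometric decay $(1 - j/N)^n$ for every $j \geq 1$. Combining the two bounds yields the claimed limit $N$.
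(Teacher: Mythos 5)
Your proof is correct, but it reaches the lower bound by a different route than the paper. The paper bounds $S(n,N)$ from below by the multinomial-type quantity $\frac{n!}{N!\,((n/N)!)^{N}}$ (partitions into $N$ blocks of equal size) and then invokes Stirling's formula, while for the upper bound it simply asserts $\sum_{k=1}^N S(n,k) \leq N^n$. You instead use the exact inclusion--exclusion formula $S(n,N) = \frac{1}{N!}\sum_{j=0}^{N}(-1)^j\binom{N}{j}(N-j)^n$ and show the alternating tail is $O\bigl(2^N(1-1/N)^n\bigr)$, yielding the sharper asymptotic $S(n,N) \sim N^n/N!$; your upper bound comes from the clean identity $N^n = \sum_{k=1}^{N}\binom{N}{k}k!\,S(n,k)$. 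Your argument buys two things: it sidesteps the slight awkwardness of $(n/N)!$ when $N$ does not divide $n$ (which the paper leaves implicit), and it avoids Stirling's formula entirely in favor of a geometric decay estimate, at the cost of invoking the explicit surjection formula. Both are complete; yours is arguably the cleaner of the two.
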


\begin{proof}
It is easy to see that
\[
\frac{n!}{N! ((n/N)!)^{N}} \leq S(n, N) \leq \sum_{k=1}^N S(n,k) \leq N^n.
\]
The result now follows by Stirling's formula.
\end{proof}

\begin{Thm}
Suppose $\set{X(t)}$ is an algebraic two-state free Brownian motion.
\begin{enumerate}
\item
Assume further that all the $\psi$-free cumulants of $X$ are non-negative. Then $\int_0^T (dX(t))^p = 0$ as a limit in $L^\infty(\phi)$ for some $p > 2$ if and only if $R^\psi_k(X) = 0$ for all $k > 2$. In this case, in fact $\int_0^T (dX(t))^p = 0$ for all $p > 2$.
\item
Assume now that $R^\psi_k(X) = 0$ for $k > 2$. Then $\int_0^T (dX(t))^2 = T$ as a limit in $L^\infty(\phi)$ if and only if $R^\psi_2(X) = T$, so that $\set{X(t)}$ is a two-state free Brownian motion with parameter $\alpha$.
\end{enumerate}
\end{Thm}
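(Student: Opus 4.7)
My plan is to derive both parts via asymptotic analysis of $\phi$-moments. Setting $W_{i,N} = X_{i,N}^p$ for part (a) and $W_{i,N} = X_{i,N}^2 - T/N$ for part (b), I will apply the two-state free cumulant--moment expansion together with the stationary $1/N$ scaling of the increments to obtain
\[
\phi\Bigl[\Bigl(\sum_{i=1}^N W_{i,N}\Bigr)^{2n}\Bigr] = \sum_\pi N^{|\hat\pi| - |\pi|} \prod_{V \in \Outer(\pi)} R^{\phi,\psi}_{|V|}(X) \prod_{V \in \Inner(\pi)} R^\psi_{|V|}(X),
\]
where $\hat\pi$ is the partition of the factor-index set $\set{1, \ldots, 2n}$ induced by $\pi$ through the factor grouping $\sigma_0$. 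The two-state free Brownian motion hypothesis annihilates $R^{\phi,\psi}_k(X)$ for $k \ne 2$, forcing outer blocks of any contributing $\pi$ to be pairs, each contributing a factor of $T$. From here I extract $\norm{\cdot}_\infty = \lim_{n \to \infty} \phi[\cdot^{2n}]^{1/2n}$, using Lemma~\ref{Lemma:Stirling} to bound combinatorial growth of partition counts.

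For part (a), in the easy direction the assumption $R^\psi_k(X) = 0$ for $k > 2$ further restricts inner blocks to sizes $1$ and $2$; enumerating the remaining Motzkin-type partitions will yield $\phi[Y_N^{2n}] = O(N^{-\delta n})$ for some $\delta > 0$ depending on $p$, whence $\norm{Y_N}_\infty \to 0$. For the converse, suppose some $R^\psi_{k_0}(X) > 0$ with $k_0 > 2$. The non-negativity hypothesis makes every term in the cumulant expansion non-negative (outer pairs give $T > 0$, inner blocks give nonnegative $R^\psi_{|V|}$), so I can lower-bound $\phi[Y_N^{2n}]$ by the contribution of any single $\pi$. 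I will construct a $\pi$ with one inner block of size $k_0$ and outer pairs elsewhere whose contribution, after the $(2n)$-th root limit, produces a positive lower bound for $\norm{Y_N}_\infty$ that is uniform in $N$.

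For part (b), in the easy direction ($R^\psi_2(X) = T$) I will work in the analytic Fock representation of Section~\ref{Section:Existence}. Using $x^2 = Q_2(x,t) + \alpha t x + t$ from Lemma~\ref{Lemma:OP}, I decompose
\[
Z_N - T = \sum_{i=1}^N Q_2(X_{i,N}, T/N) + \alpha (T/N) X(T).
\]
Via $X(g) = a^+(g) + a^-(g) + \alpha \langle g \rangle$, each $Q_2(X(g_i), T/N)$ expands into $a^+(g_i)^2 + a^-(g_i)^2 + a^+(g_i) a^-(g_i)$ plus lower order, and orthogonality of the $g_i$'s then gives $\norm{\sum_i a^\pm(g_i)^2} = O(T/\sqrt{N})$ and $\norm{\sum_i a^+(g_i) a^-(g_i)} = O(T/N)$, so $\norm{Z_N - T} \to 0$ in operator norm, hence in $L^\infty(\phi)$. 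For the hard direction, setting $c := R^\psi_2(X) \neq T$, I will isolate the unique partition $\pi = \set{\set{1, 2n}, \set{2}, \ldots, \set{2n-1}}$ with $|\pi| = 2n-1$ contributing nontrivially to $\phi[(Z_N - T)^{2n}]$: any other configuration at this order contains an outer singleton, which is killed by $R^{\phi,\psi}_1(W_{1,N}) = 0$. A direct computation yields $R^{\phi,\psi}_2(W_{1,N}) \sim Tc/N^2$ and $R^\psi_1(W_{1,N}) \sim (c-T)/N$, so this $\pi$ contributes $\sim Tc(c-T)^{2n-2}/N$; its $(2n)$-th root tends to $\abs{c-T}$ as $n \to \infty$, yielding $\norm{Z_N - T}_\infty \geq \abs{c - T} > 0$ independent of $N$.

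The hardest step will be controlling the subleading contributions when taking the $(2n)$-th root: the number of partitions in $\NC(2n)$ grows like $C_{2n} \sim 4^{2n}$, so in principle subleading $\pi$'s could be amplified by the $1/(2n)$ exponent and overwhelm the leading asymptotic. Lemma~\ref{Lemma:Stirling} will provide the bound $\bigl(\sum_{k=1}^N S(n,k)\bigr)^{1/n} \to N$, which keeps partition-count growth sub-exponential relative to what the $(2n)$-th root can amplify. Combined with the sign/positivity analysis (non-negativity for part (a), the explicit $|\pi| = 2n-1$ leading structure for part (b)), this will confirm that the leading partition indeed governs the $L^\infty$ asymptotic in each case.
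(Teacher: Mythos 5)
Your overall strategy (expand $\phi$-moments of the sums via two-state free cumulants, exploit the $1/N$ scaling, extract $2n$-th roots) is the same as the paper's, and your easy directions are essentially sound --- indeed, your Fock-space operator-norm estimate for the case $\beta=1$ of part (b) is a legitimate alternative to the paper's combinatorial bound, since $\norm{\cdot}_\infty$ depends only on $\phi$-moments and these are determined by the algebraic data, so one may compute in the Fock model. However, both of your ``hard'' directions have genuine gaps.

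For the converse of part (a), your partition with one inner block of \emph{fixed} size $k_0$ and outer pairs elsewhere does not give a lower bound uniform in $N$. Count the powers of $N$: such a $\pi \subset \NC(2pn)$ has $\abs{\pi} = pn + 1 - k_0/2$ blocks, each cumulant contributes a factor $1/N$, and the number of compatible index tuples is at most $N^{2n}$; so the contribution is $O\bigl(C^{pn} N^{2n - pn + k_0/2 - 1}\bigr)$, whose $2n$-th root tends to $C^{p/2} N^{1-p/2} \rightarrow 0$ as $N \rightarrow \infty$ since $p>2$. The paper instead uses a single inner block of size $pn-2$ \emph{growing with} $n$, which yields $\norm{\sum_i X_{i,N}^p}_\infty \geq \limsup_n R^\psi_{2pn-2}(X)^{1/2n}$ uniformly in $N$; but this only controls the \emph{exponential growth rate} of high-order cumulants, not their vanishing. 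Converting ``$\limsup_n \abs{R^\psi_n(X)}^{1/n} = 0$'' into ``$R^\psi_k(X)=0$ for $k>2$'' is the real content of part (a): the paper invokes the free L\'evy--Khintchine representation $R^{\nu_t}(z) = t(\alpha z + \int z^2/(1-xz)\,d\lambda(x))$ (so that the cumulants of order $\geq 3$ are moments of a positive measure $\lambda$), a H\"older interpolation, and Stieltjes inversion to force $\lambda$ to be a multiple of $\delta_0$. This entire analytic step is absent from your proposal, and without it (or some equivalent positivity argument for the cumulant sequence) knowing that one cumulant $R^\psi_{k_0}$ is positive tells you nothing about $R^\psi_{2pn-2}$ for large $n$.

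For the hard direction of part (b), your leading partition (one outer pair, $2n-2$ inner singletons) does give a contribution whose $2n$-th root tends to $\abs{c-T}$, and it is indeed the unique contributing partition with $2n-1$ blocks. But isolating it is not enough: for \emph{fixed} $N$ the partitions with fewer blocks are suppressed only by additional powers of $1/N$, not by factors decaying exponentially in $n$, so after taking the $2n$-th root and letting $n \rightarrow \infty$ they contribute at comparable exponential rates with various signs ($\alpha$ and $c-T$ may be negative), and could in principle cancel your leading term. Lemma~\ref{Lemma:Stirling} does not address this, since the obstruction is cancellation, not partition-count growth. The paper circumvents this by proving the exact resummation of Lemma~\ref{Lemma:Sum}, in which the binomial expansion of $(\sum_i X_{i,N}^2 - T)^n$ produces a factor $(1-1)^{\abs{\Outer(\sigma)\cap\tau_n}}$ that kills all $\sigma$ with outer blocks in $\tau_n$, and then selects a specific surviving $\sigma$ whose coefficient $\sum_{k\leq N} S(n-1,k) \approx N^{n-1}$ exactly offsets $(T/N)^n$; your count of raw index tuples does not reproduce this structure. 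To complete your argument you would need either an analogous exact cancellation or a positivity/domination argument for the remaining terms.
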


\begin{proof}
For part (a), using both assumptions on the process,
\[
\phi \left[ \left(\sum_{i=1}^N X_{i,N}^p \right)^n \right]
\geq \phi \left[ \sum_{i=1}^N X_{i,N}^{p n} \right]
\geq N \left( \frac{1}{N} R^{\phi, \psi}_2(X) \right) \left( \frac{1}{N} R^\psi_{p n - 2}(X) \right)
= \frac{1}{N} T R^\psi_{p n - 2}(X).
\]
Therefore
\[
\norm{\sum_{i=1}^N X_{i,N}^p}_{2n}
\geq \left( \frac{1}{N} T R^\psi_{2 p n - 2}(X) \right)^{1/2n}
\]
and
\[
\norm{\sum_{i=1}^N X_{i,N}^p}_\infty
\geq \limsup_{n \rightarrow \infty} R^\psi_{2 p n - 2}(X)^{1/2n}.
\]
So to have $\lim_{N \rightarrow \infty} \norm{\sum_{i=1}^N X_{i,N}^p}_\infty = 0$, we need $\limsup_{n \rightarrow \infty} R^\psi_{2 p n-2}(X)^{1/2n} = 0$. Denote by $R^{\nu_t}(z)$ the generating function for the $\psi$-free cumulants of $X(t)$. Since $\set{\nu_t}$ form a free convolution semigroup, and all their moments are finite, we have the free canonical representation (Theorem~6.2 of \cite{Maa92})
\[
R^{\nu_t}(z) = t \left( \alpha z + \int_{\mf{R}} \frac{z^2}{1 - x z} \,d\lambda(x) \right)
\]
for a finite positive measure $\lambda$ (our $R$ is $z$ times the usual $R$-transform). In particular, for $n \geq 2$, $R^\psi_{n+2}(X(t)) = t \int_{\mf{R}} x^n \,d\lambda(x)$. Since by H{\"o}lder's inequality,
\[
\int_{\mf{R}} \abs{x}^{n-1} \,d\lambda(x) \leq \left( \int_{\mf{R}} \abs{x}^{n} \,d\lambda(x) \right)^{(n-1)/n} \lambda(\mf{R})^{1/n},
\]
in fact $\limsup_{n \rightarrow \infty} \abs{R^\psi_{n}(X)}^{1/n} = 0$. This says that $R^{\nu_t}(z)$ is analytic in the complex plane. It follows that the Cauchy transform
\[
G_\lambda(z) = \int_{\mf{R}} \frac{1}{z - x} \,d\lambda(x) = \frac{z}{t} R^{\nu_t}(1/z) - \alpha
\]
is also analytic, except possibly at $z=0$. But then by the Stieltjes inversion formula
\[
d\lambda(x) = - \frac{1}{\pi} \lim_{y \downarrow 0} \ \mathrm{Im} \ G_\lambda(x + i y),
\]
$\lambda$ is a multiple of $\delta_0$. So $R^{\nu_t}(z) = t (\alpha z + \beta z^2)$, and $\nu_t$ is a semicircular distribution.

On the other hand, if $R^\psi_1(X) = \alpha T$, $R^\psi_2(X) = \beta T$, and $R^\psi_k(X) = 0$ for $k > 2$, then
\[
\phi \left[ \left(\sum_{i=1}^N X_{i,N}^k \right)^n \right]
\leq N^n \frac{1}{N^{k n/2}} \abs{\NC(kn)} \max(1, T, \abs{\alpha} T, \beta T)^{k n},
\]
\[
\norm{\sum_{i=1}^N X_{i,N}^k}_\infty \leq \frac{1}{N^{k/2-1}} 4^k \max(1, T, \abs{\alpha} T, \beta T)^{k},
\]
and
\[
\lim_{N \rightarrow \infty} \norm{\sum_{i=1}^N X_{i,N}^k}_\infty = 0.
\]

For part (b), we first assume that $\beta \neq 1$ and use

\begin{Lemma}
\label{Lemma:Sum}
\[
\phi \left[ \left(\sum_{i=1}^N X_{i,N}^2 - T \right)^n \right] = \sum_{\substack{\sigma \in \NC_{1,2}(2n) \\ \Outer(\sigma) \cap \Sing(\sigma) = \emptyset \\ \Outer(\sigma) \cap \tau_n = \emptyset}} \left( \sum_{\substack{\pi \in \Part(2n) \\ \pi \geq (\sigma \vee \tau_n)}} N_{\abs{\pi}} \right) \left(\frac{T}{N}\right)^{\abs{\sigma}} \alpha^{\abs{\Sing(\sigma)}}(\beta - 1)^{\abs{\Inner(\sigma) \cap \tau_n}},
\]
where $N_n = N (N-1) \ldots (N-n+1)$ and
\[
\tau_n = \set{(1,2), (3,4), \ldots, (2n-1, 2n)}.
\]
\end{Lemma}

Note also that $N_{\abs{\pi}} = 0$ for $\abs{\pi} > N$.

Now take
\[
\sigma = \set{(1, 2n), (2, 2n-1), (3,4), (5,6), \ldots, (2n-3, 2n-2)}.
\]
Then
\[
\sigma \vee \tau_n = \set{(1, 2, 2n-1, 2n), (3,4), (5,6), \ldots, (2n-3, 2n-2)}
\]
and $\abs{\sigma \vee \tau_n} = n-1$. Therefore
\[
\abs{\set{\pi \in \Part(2n): \pi \geq (\sigma \vee \tau_n), \abs{\pi} \leq N}} = \abs{\set{\pi \in \Part(n-1), \abs{\pi} \leq N}} = \sum_{k=1}^{N} S(n-1, k).
\]
So using only the term corresponding to $\sigma$,
\[
\phi \left[ \left(\sum_{i=1}^N X_{i,N}^2 - T \right)^n \right]
\geq \frac{1}{N^{n}} \sum_{k=1}^{N} S(n-1, k) T^n (\beta - 1)^{n-2}
\]
and by Lemma~\ref{Lemma:Stirling},
\[
\norm{\sum_{i=1}^N X_{i,N}^2 - T}_\infty \geq T \abs{\beta - 1}
\]
for all $N$.

On the other hand, if $\beta = 1$, then
\[
\phi \left[ \left(\sum_{i=1}^N X_{i,N}^2 - T \right)^n \right]
= \sum_{\substack{\sigma \in \NC_{1,2}(2n) \\ \Outer(\sigma) \cap \Sing(\sigma) = \emptyset \\ \sigma \wedge \tau_n = \hat{0}}}
\left( \sum_{\substack{\pi \in \Part(2n) \\ \pi \geq (\sigma \vee \tau_n)}} N_{\abs{\pi}} \right) \left(\frac{T}{N}\right)^{\abs{\sigma}} \alpha^{\abs{\Sing(\sigma)}}.
\]
In this case, the conditions $\sigma \in \NC_{1,2}(2n)$, $\sigma \wedge \tau_n = \hat{0}$ guarantee that $\abs{\sigma \vee \tau_n} \leq \abs{\sigma} - \frac{n}{2}$, and so for each such $\sigma$,
\[
\abs{\set{\pi \in \Part(2n): \pi \geq (\sigma \vee \tau_n), \abs{\pi} \leq N}}
= \abs{\set{\pi \in \Part(\abs{\sigma \vee \tau_n}), \abs{\pi} \leq N}}
\leq N^{\abs{\sigma \vee \tau_n}}
\leq N^{\abs{\sigma} - n/2}.
\]
This time we also note that $N_{\abs{\pi}} \leq N^N$. Then
\[
\begin{split}
\phi \left[ \left(\sum_{i=1}^N X_{i,N}^2 - T \right)^n \right]
& \leq \sum_{\substack{\sigma \in \NC_{1,2}(2n) \\ \sigma \wedge \tau_n = \hat{0}}}
N^{\abs{\sigma \vee \tau_n} - \abs{\sigma}} N^N \max(1, T, \abs{\alpha} T)^{2n} \\
& \leq 4^{2n} N^{-n/2} N^N \max(1, T, \abs{\alpha} T)^{2n}
\end{split}
\]
and
\[
\norm{\sum_{i=1}^N X_{i,N}^2 - T}_{\infty} \leq \frac{1}{\sqrt{N}} 4^2 \max(1, T, \abs{\alpha} T)^2,
\]
so
\[
\lim_{N \rightarrow \infty} \norm{\sum_{i=1}^N X_{i,N}^2 - T}_\infty = 0. \qedhere
\]
\end{proof}

\begin{proof}[Proof of Lemma~\ref{Lemma:Sum}]
We first note that
\[
\binom{n}{k} \phi \left[ \left(\sum_{i=1}^N X_{i,N}^2 \right)^{n-k} T^k \right]
= \sum_{i(1), i(2), \ldots, i(n) = 1}^N \ \sum_{\substack{S \subset \set{1, 2, \ldots, n}, \\ \abs{S} = k}} \phi\left[Y_{i(1), 1, N}^2 Y_{i(2), 2, N}^2\ldots Y_{i(n), n, N}^2 \right],
\]
where
\[
Y_{i, j, N} =
\begin{cases}
X_{i, N}, & j \not \in S, \\
\sqrt{\frac{T}{N}}, & j \in S.
\end{cases}
\]
This expression equals
\[
\begin{split}
& \sum_{i(1), i(2), \ldots, i(n) = 1}^N \ \sum_{\substack{S \subset \set{1, 2, \ldots, n}, \\ \abs{S} = k}} \ \sum_{\sigma \in \NC(2n)} \prod_{V \in \Outer(\sigma)} R^{\phi, \psi}(Y_{i(1), 1, N}, Y_{i(1), 1, N}, \ldots, Y_{i(n), n, N} : V) \\
&\qquad \prod_{V \in \Inner(\sigma)} R^{\psi}(Y_{i(1), 1, N}, Y_{i(1), 1, N}, \ldots, Y_{i(n), n, N} : V).
\end{split}
\]
Since $R^{\phi, \psi}_m(X_{i,N}) = 0$ for $m \neq 2$ and $R^\psi_m(X_{i,N}) = 0$ for $m > 2$, this simplifies to
\[
\begin{split}
& \sum_{i(1), i(2), \ldots, i(n) = 1}^N \ \sum_{\substack{U \subset \tau_n \\ \abs{U} = k}} \ \sum_{\substack{\sigma \in \NC_{1,2}(2n) \\ U \subset \sigma \\ \Outer(\sigma) \cap \Sing(\sigma) = \emptyset}} \prod_{(u, v) \in U} \frac{T}{N} \prod_{(u, v) \in \Outer(\sigma) \setminus U} R^{\phi, \psi} \left(X_{i([(u+1)/2]), N}, X_{i([(v+1)/2]), N} \right) \\
&\qquad \prod_{(u, v) \in \Inner(\sigma) \setminus U} R^{\psi} \left(X_{i([(u+1)/2]), N}, X_{i([(v+1)/2]), N} \right) \prod_{(u) \in \Inner(\sigma)} R^{\psi} \left(X_{i([(u+1)/2]), N} \right),
\end{split}
\]
where $[a]$ denotes the integer part.

For each choice of $(i(1), i(2), \ldots, i(n))$, we define the partition $\pi \in \Part(2n)$ by $(2j-1) \stackrel{\pi}{\sim} (2j)$ and
\[
2j_1 \stackrel{\pi}{\sim} 2 j_2 \Leftrightarrow i(j_1) = i(j_2).
\]
Then the sum is transformed into
\[
\sum_{\substack{U \subset \tau_n \\ \abs{U} = k}} \ \sum_{\substack{\sigma \in \NC_{1,2}(2n) \\ U \subset \sigma \\ \Outer(\sigma) \cap \Sing(\sigma) = \emptyset}} \sum_{\substack{\pi \in \Part(2n) \\ \pi \geq \sigma \\ \pi \geq \tau_n}} N_{\abs{\pi}} \prod_{(u, v) \in U} \frac{T}{N} \prod_{(u,v) \in \Outer(\sigma) \setminus U} \frac{T}{N} \prod_{(u) \in \sigma} \alpha \frac{T}{N} \prod_{(u,v) \in \Inner(\sigma) \setminus U} \beta \frac{T}{N}.
\]
Therefore
\[
\begin{split}
& \phi \left[ \left(\sum_{i=1}^N X_{i,N}^2 - T \right)^n \right]
= \sum_{k=0}^n (-1)^k \binom{n}{k} \phi \left[ \left(\sum_{i=1}^N X_{i,N}^2 \right)^{n-k} T^k \right] \\
& = \sum_{k=0}^n (-1)^k \sum_{\substack{\sigma \in \NC_{1,2}(2n) \\ \Outer(\sigma) \cap \Sing(\sigma) = \emptyset}} \
\sum_{\substack{U \subset (\sigma \cap \tau_n) \\ \abs{U} = k}}
\left( \sum_{\substack{\pi \in \Part(2n) \\ \pi \geq (\sigma \vee \tau_n)}} N_{\abs{\pi}} \right) \left(\frac{T}{N}\right)^{\abs{\sigma}} \\
&\qquad \prod_{(u, v) \in U} 1 \prod_{(u,v) \in \Outer(\sigma) \setminus U} 1 \prod_{(u) \in \sigma} \alpha \prod_{(u,v) \in \Inner(\sigma) \setminus U} \beta \\
& = \sum_{\substack{\sigma \in \NC_{1,2}(2n) \\ \Outer(\sigma) \cap \Sing(\sigma) = \emptyset}}  \left( \sum_{\substack{\pi \in \Part(2n) \\ \pi \geq (\sigma \vee \tau_n)}} N_{\abs{\pi}} \right) \left(\frac{T}{N}\right)^{\abs{\sigma}} \alpha^{\abs{\Sing(\sigma)}} (1 - 1)^{\abs{\Outer(\sigma) \cap \tau_n}} (\beta - 1)^{\abs{\Inner(\sigma) \cap \tau_n}} \\
& = \sum_{\substack{\sigma \in \NC_{1,2}(2n) \\ \Outer(\sigma) \cap \Sing(\sigma) = \emptyset \\ \Outer(\sigma) \cap \tau_n = \emptyset}} \left( \sum_{\substack{\pi \in \Part(2n) \\ \pi \geq (\sigma \vee \tau_n)}} N_{\abs{\pi}} \right) \left(\frac{T}{N}\right)^{\abs{\sigma}} \alpha^{\abs{\Sing(\sigma)}}(\beta - 1)^{\abs{\Inner(\sigma) \cap \tau_n}}. \qedhere
\end{split}
\]
\end{proof}

\begin{Cor}
Let $(\mc{A}, \phi, \psi)$ be a $C^\ast$-noncommutative probability space, so that $\mc{A}$ is a $C^\ast$-algebra, $\phi$ and $\psi$ states on it, and $\phi$ is faithful. Suppose that $\mc{A}$ is generated by an algebraic two-state free Brownian motion $\set{X(t)}$ all of whose $\psi$-free cumulants are non-negative. Suppose also that $\int_0^T (dX(t))^k = 0$ for $k > 2$ and $\int_0^T (dX(t))^2 = T$, where the limits are taken in the operator norm. Then $\set{X(t)}$ is a two-state free Brownian motion with parameter $\alpha$.
\end{Cor}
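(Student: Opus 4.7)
The plan is to reduce the corollary directly to the preceding Theorem by translating the operator-norm hypothesis into $L^\infty(\phi)$ convergence. Since $\phi$ is a faithful state on the $C^\ast$-algebra $\mc{A}$, the norm $\norm{A}_\infty = \lim_n \norm{A}_{2n}$ coincides with the $C^\ast$-norm on $\mc{A}^{sa}$, as recalled at the opening of Section~\ref{Section:Cstar}. Consequently the assumed convergences $\sum_{i=1}^N X_{i,N}^k \to 0$ for $k > 2$ and $\sum_{i=1}^N X_{i,N}^2 \to T$ in operator norm coincide with the corresponding convergences in $L^\infty(\phi)$ to which the preceding Theorem applies.

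With that translation I would first invoke part (a) of the preceding Theorem. Its hypotheses match the present ones exactly: an algebraic two-state free Brownian motion with non-negative $\psi$-free cumulants, together with $\int_0^T (dX(t))^p = 0$ in $L^\infty(\phi)$ for some $p > 2$ (in fact for every such $p$, which is more than we need). The conclusion is that $R^\psi_k(X(T)) = 0$ for every $k > 2$. Having secured this, the running hypothesis of part (b) is now satisfied, and the assumption $\int_0^T (dX(t))^2 = T$ in $L^\infty(\phi)$ forces $R^\psi_2(X(T)) = T$. Combined with the terminology set in Corollary~\ref{Cor:Variations}, this identifies $\set{X(t)}$ as a two-state free Brownian motion with parameter $\alpha$, where $\alpha$ is determined by $R^\psi_1(X(T)) = \alpha T$.

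There is no substantive obstacle here: all of the combinatorial and analytic work has already been done in the preceding Theorem and in Lemma~\ref{Lemma:Sum}. The only subtlety worth flagging is the identification of the $C^\ast$-norm with $\norm{\cdot}_\infty$, which is precisely where the faithfulness of $\phi$ is used; without it, the $C^\ast$-norm could strictly dominate $\norm{\cdot}_\infty$, and the operator-norm hypothesis would convey more information than the Theorem is designed to handle.
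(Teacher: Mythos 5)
Your proposal is correct and matches the paper's intent exactly: the corollary is stated without a separate proof precisely because it is the immediate combination of parts (a) and (b) of the preceding Theorem, using the identification of the operator norm with $\norm{\cdot}_\infty$ for faithful $\phi$ noted at the start of Section~\ref{Section:Cstar}. Your spelled-out deduction, including the order in which parts (a) and (b) are applied, is the intended argument.
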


\begin{Remark}
In the setting of the preceding corollary, our results do not imply directly that if $\mc{A}$ is generated by an algebraic two-state free Brownian motion $\set{X(t)}$ without any extra assumptions, then $\set{X(t)}$ has to be a two-state free Brownian motion with parameter $\alpha$. So it is possible that new examples may arise if we only assume that the state $\phi$ is faithful on the $C^\ast$-algebra and not on the von Neumann algebra generated by the process.

On the other hand, note that the argument in Proposition~\ref{Prop:Larger-than-T} shows that for a two-state free Brownian motion with parameter $\alpha$, for $t > 1/\alpha^2$ the state $\phi$ is not faithful even on the $C^\ast$-algebra.
\end{Remark}


\def\cprime{$'$}
\providecommand{\bysame}{\leavevmode\hbox to3em{\hrulefill}\thinspace}
\providecommand{\MR}{\relax\ifhmode\unskip\space\fi MR }
\providecommand{\MRhref}[2]{%
  \href{http://www.ams.org/mathscinet-getitem?mr=#1}{#2}
}
\providecommand{\href}[2]{#2}

\end{document}